\pgfplotsset{compat=1.15}
\newcommand{\setword}[2]{%
	\phantomsection
	#1\def\@currentlabel{\unexpanded{#1}}\label{#2}%
}
\definecolor{uuuuuu}{rgb}{0.26666666666666666,0.26666666666666666,0.26666666666666666}
\definecolor{xdxdff}{rgb}{0.49019607843137253,0.49019607843137253,1.}
\definecolor{ffqqqq}{rgb}{1.,0.,0.}
\definecolor{ffqqqq}{rgb}{1.,0.,0.}
\definecolor{ffxfqq}{rgb}{1.,0.4980392156862745,0.}
\definecolor{uuuuuu}{rgb}{0.26666666666666666,0.26666666666666666,0.26666666666666666}
\definecolor{qqwuqq}{rgb}{0.,0.39215686274509803,0.}
\definecolor{zzttqq}{rgb}{0.6,0.2,0.}
\definecolor{xdxdff}{rgb}{0.49019607843137253,0.49019607843137253,1.}
\definecolor{qqqqff}{rgb}{0.,0.,1.}
\definecolor{cqcqcq}{rgb}{0.7529411764705882,0.7529411764705882,0.7529411764705882}
\definecolor{sqsqsq}{rgb}{0.12549019607843137,0.12549019607843137,0.12549019607843137}
\definecolor{uuuuuu}{rgb}{0.26666666666666666,0.26666666666666666,0.26666666666666666}
\definecolor{ffqqqq}{rgb}{1,0,0}
\definecolor{xdxdff}{rgb}{0.49019607843137253,0.49019607843137253,1}
\definecolor{yqqqyq}{rgb}{0.5019607843137255,0,0.5019607843137255}
\definecolor{qqqqff}{rgb}{0,0,1}
\definecolor{ffqqqq}{rgb}{1,0,0}
\definecolor{ffqqff}{rgb}{1,0,1}
\theoremstyle{plain}
\newtheorem{theorem}[subsection]{Theorem}
\newtheorem{prop}[subsection]{Proposition}
\newtheorem{prop1}[subsubsection]{Proposition}
\theoremstyle{definition}
\newtheorem{defi1}[subsection]{Definition}
\newtheorem{cor}[subsection]{Corollary}
\newtheorem{exam}[subsection]{Example}
\newtheorem{fact}[subsection]{Fact}
\newtheorem{remark}[subsection]{Remark}
\newtheorem{remark1}[subsubsection]{Remark}
\newtheorem{note}[subsection]{Note}
\newcommand{\uu}{\cup}
\newcommand{\ii}{\cap}
\newcommand{\UU}{\bigcup}
\newcommand{\II}{\bigcap}
\newcommand{\ci}{\subseteq}
\newcommand{\es}{\emptyset}
\newcommand{\set}[1]{\{#1\}}
\newcommand{\ga}{\alpha}
\renewcommand{\gg}{\gamma}
\newcommand{\gk}{\kappa}
\newcommand{\gq}{\theta}
\newcommand{\tit}{\textit}
\newcommand{\C}[1]{\mathcal{#1}}
\newcommand{\D}[1]{\mathbb{#1}}
\newcommand{\te}{\text}
\newcommand{\ol}{\overline}
\newcommand{\ul}{\underline}
\newcommand{\nd}{\noindent}
\newcommand{\pa}{\partial}
\begin{document}

  \nd To appear,\tit{ Mathematics} 
 
	\title{Constrained quantization for probability distributions}

	\author{$^1$Megha Pandey}
	\author{$^2$Mrinal Kanti Roychowdhury}

	\address{$^{1}$Department of Mathematical Sciences \\
		Indian Institute of Technology (Banaras Hindu University)\\
		Varanasi, 221005, India.}
	\address{$^{2}$School of Mathematical and Statistical Sciences\\
		University of Texas Rio Grande Valley\\
		1201 West University Drive\\
		Edinburg, TX 78539-2999, USA.}

	\email{$^1$meghapandey1071996@gmail.com, $^2$mrinal.roychowdhury@utrgv.edu}

	\subjclass[2010]{60E05, 94A34.}
	\keywords{Probability measure, constrained quantization error, optimal sets of $n$-points, constrained quantization dimension, constrained quantization coefficient.}

	\date{}
	\maketitle
	\pagestyle{myheadings}\markboth{Megha Pandey and Mrinal K. Roychowdhury}{Constrained quantization for probability distributions}
	
\begin{abstract}
In this work, we extend the classical framework of quantization for Borel probability measures defined on normed spaces $\mathbb{R}^k$ by introducing and analyzing the notions of the $n$th constrained quantization error, constrained quantization dimension, and constrained quantization coefficient. These concepts generalize the well-established $n$th quantization error, quantization dimension, and quantization coefficient, traditionally considered in the unconstrained setting, and thereby broaden the scope of quantization theory. A key distinction between the unconstrained and constrained frameworks lies in the structural properties of optimal quantizers. In the unconstrained setting, if the support of $P$ contains at least $n$ elements, then the elements of an optimal set of $n$-points coincide with the conditional expectations over their respective Voronoi regions; this characterization does not, in general, persist under constraints. Moreover, it is known that if the support of $P$ contains at least $n$ elements, then any optimal set of $n$-points in the unconstrained case consists of exactly $n$ distinct elements. This property, however, may fail to hold in the constrained context.
Further differences emerge in asymptotic behaviors. For absolutely continuous probability measures, the unconstrained quantization dimension is known to exist and equals the Euclidean dimension of the underlying space. In contrast, we show that this equivalence does not necessarily extend to the constrained setting. Additionally, while the unconstrained quantization coefficient exists and assumes a unique, finite, and positive value for absolutely continuous measures, we establish that the constrained quantization coefficient can exhibit significant variability and may attain any nonnegative value, depending critically on the specific nature of the constraint applied to the quantization process.
\end{abstract}

\maketitle

\section{Introduction}
The most common form of quantization is rounding-off. Its purpose is to reduce the cardinality of the representation space, in particular, when the input data is real-valued. It has broad applications in communications, information theory, signal processing, and data compression (see \cite{BW, GG, GL, GN, GL1,  Z1, Z2, P}).
For $k\in \mathbb{N}$, where $\mathbb{N}$ is the set of natural numbers, let $d$ be a metric induced by a norm $\|\cdot\|$ on $\D R^k$.  
Let $P$ be a Borel probability measure on $\D R^k$  and $r \in (0, \infty)$. Let $S$ be a nonempty closed subset of $\D R^k$ and $\ga\ci S$ be a locally finite (i.e., intersection of $\ga$ with any bounded subset of $\D R^k$ is finite) subset of $\D R^k$. This implies that $\ga$ is countable and closed. Then, the distortion error for $P$, of order $r$, with respect to the set $\ga \ci S$, denoted by $V_r(P; \ga)$, is defined by
\begin{equation*}
	V_r(P; \ga)= \int \mathop{\min}\limits_{a\in\ga} d(x, a)^r dP(x).
\end{equation*}
Then, for $n\in \mathbb{N}$, the \tit {$n$th constrained quantization
	error} for $P$, of order $r$, with respect to the set $S$, is defined by
\begin{equation} \label{Vr} V_{n, r}:=V_{n, r}(P)=\inf \Big\{V_r(P; \ga) : \ga \ci S, ~ 1\leq  \text{card}(\ga) \leq n \Big\},\end{equation}
where $\te{card}(A)$ represents the cardinality of a set $A$. If in the definition of $n$th constrained quantization error, the set $S$, known as \tit{constraint}, is chosen as the set $\D R^k$ itself,
then the \tit{$n$th constrained quantization error} is referred to as the \tit{$n$th unconstrained quantization error}, which traditionally in the literature is referred to as the \tit{$n$th quantization error}. For some recent work in the direction of unconstrained quantization, one can see \cite{GL2, GL3,  DFG, DR,  KNZ, PRRSS,P1, R1, R2, R3, R4, RS}. For the probability measure $P$, we assume that 
\begin{equation} \label{M1eq}
	\int d(x, 0)^r dP(x)<\infty.
\end{equation}
A set $\ga$ for which the infimum in \eqref{Vr} exists and does not contain more than $n$ elements is called a constrained optimal set of $n$-points for $P$ with respect to the constraint $S$. The collection of all optimal sets of $n$-points for $P$ with respect to the constraint $S$ is denoted by $\C C_{n, r}(P; S)$.

\begin{prop}\label{prop0}
	Let the assumption~\eqref{M1eq} be true. Then, $V_{n, r}(P)$ exists and is a decreasing sequence of finite nonnegative numbers.
\end{prop}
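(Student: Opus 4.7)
The plan is to verify three things separately: (i) finiteness of $V_{n,r}(P)$, (ii) nonnegativity, and (iii) the monotonicity $V_{n+1,r}(P) \leq V_{n,r}(P)$. Nonnegativity is immediate because the integrand $\min_{a\in \ga} d(x,a)^r$ is pointwise nonnegative, so every $V_r(P;\ga)$ is nonnegative and hence so is any infimum of such quantities.

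For finiteness, the main task is to produce at least one admissible $\ga \ci S$ with $V_r(P;\ga) < \infty$, which then bounds $V_{n,r}(P)$ above. Since $S$ is nonempty, I would fix any $a_0 \in S$ and test the singleton $\ga_0 = \{a_0\}$, which is admissible for every $n \geq 1$. The task reduces to showing $\int d(x,a_0)^r \, dP(x) < \infty$ using assumption~\eqref{M1eq}. The standard inequality $(u+v)^r \leq 2^r(u^r+v^r)$ (valid for all $r>0$, $u,v \geq 0$) applied to the triangle inequality $d(x,a_0) \leq d(x,0) + d(0,a_0)$ yields
\[
\int d(x,a_0)^r \, dP(x) \;\leq\; 2^r \int d(x,0)^r\, dP(x) + 2^r d(0,a_0)^r,
\]
and both terms on the right are finite. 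Hence $V_{n,r}(P) \leq V_r(P;\ga_0) < \infty$.

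For the monotonicity, I would simply observe that every admissible $\ga$ for $V_{n,r}(P)$, i.e.\ $\ga \ci S$ with $1 \leq \te{card}(\ga) \leq n$, is also admissible for $V_{n+1,r}(P)$, since $\te{card}(\ga) \leq n \leq n+1$. Therefore the infimum defining $V_{n+1,r}(P)$ is taken over a larger collection of test sets than the infimum defining $V_{n,r}(P)$, which gives $V_{n+1,r}(P) \leq V_{n,r}(P)$.

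I do not anticipate a serious obstacle here; the only mildly delicate point is choosing the right constant in the triangle-inequality step, since the bound $(u+v)^r \leq u^r + v^r$ holds only for $r \leq 1$ while $(u+v)^r \leq 2^{r-1}(u^r+v^r)$ holds only for $r \geq 1$. Using the uniform bound $2^r(u^r+v^r)$ sidesteps splitting into cases and makes the argument work for all $r \in (0,\infty)$ simultaneously. Note also that the proof does not require $S$ to be closed or the infimum to be attained; existence of $V_{n,r}(P)$ here just means that the infimum of a nonempty set of finite nonnegative reals is a well-defined finite nonnegative real.
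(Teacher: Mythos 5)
Your proposal is correct and follows essentially the same route as the paper: both reduce finiteness to the bound $\int d(x,a)^r\,dP(x)\leq 2^r\big(\int d(x,0)^r\,dP(x)+d(0,a)^r\big)$ obtained from the triangle inequality, and both obtain monotonicity from the nesting of the admissible collections. The only cosmetic difference is that you test a single singleton $\{a_0\}\ci S$, while the paper notes $V_r(P;\ga)<\infty$ for every finite $\ga\ci S$; either suffices to bound the infimum.
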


\begin{proof} 
Assume that condition \eqref{M1eq} holds, i.e.,
\[
\int d(x,0)^r\, dP(x) < \infty.
\]
We first show that this implies
\[
\int d(x,a)^r\, dP(x) < \infty \quad \text{for every } a\in \mathbb{R}^k.
\]
By the triangle inequality,
\[
d(x,a) \le d(x,0) + d(0,a) \le 2\max\{d(x,0),\, d(0,a)\}.
\]
Raising both sides to the power \(r\) (where \(0<r<\infty\)) yields
\[
d(x,a)^r \le 2^r \max\{d(x,0)^r,\, d(0,a)^r\}
\le 2^r\big(d(x,0)^r + d(0,a)^r\big).
\]
Integrating with respect to \(P\), we obtain
\[
\int d(x,a)^r\, dP(x)
\le 2^r\left(\int d(x,0)^r\, dP(x) + d(0,a)^r\right)
< \infty,
\]
since the first integral is finite by \eqref{M1eq} and \(d(0,a)^r\) is a finite constant. Hence,
\[
\int d(x,a)^r\, dP(x) < \infty \quad \text{for all } a\in\mathbb{R}^k. \tag{$\star$}
\]
Now, let \(\alpha \subseteq S \subseteq \mathbb{R}^k\) be any nonempty finite set, and consider
\[
V_r(P;\alpha) = \int \min_{a\in\alpha} d(x,a)^r\, dP(x).
\]
Since \(\min_{a\in\alpha} d(x,a)^r \ge 0\), we have \(V_r(P;\alpha) \ge 0\). Moreover, for any fixed \(a_0\in\alpha\),
\[
\min_{a\in\alpha} d(x,a)^r \le d(x,a_0)^r,
\]
and therefore, using \((\star)\),
\[
0 \le V_r(P;\alpha)
= \int \min_{a\in\alpha} d(x,a)^r\, dP(x)
\le \int d(x,a_0)^r\, dP(x)
< \infty.
\]
Thus, every admissible finite set \(\alpha\) yields a finite nonnegative value of \(V_r(P;\alpha)\).

Consequently, when we define
\[
V_{n,r}(P)
= \inf\left\{ V_r(P;\alpha) : \alpha \subseteq S,\ 1 \le \mathrm{card}(\alpha) \le n \right\},
\]
the infimum is taken over a nonempty collection of finite nonnegative numbers. Hence,
\[
0 \le V_{n,r}(P) < \infty,
\]
showing that \(V_{n,r}(P)\) exists as a finite nonnegative number.

Finally, the monotonicity follows immediately from the definition. If \(m>n\), then
\[
\{\alpha \subseteq S : 1 \le \mathrm{card}(\alpha) \le n\}
\subseteq
\{\alpha \subseteq S : 1 \le \mathrm{card}(\alpha) \le m\},
\]
so taking infima over larger sets cannot increase the value. Therefore,
\[
V_{m,r}(P) \le V_{n,r}(P),
\]
and thus \(\{V_{n,r}(P)\}_{n\ge1}\) is a decreasing sequence. This completes the proof. 
\end{proof}
 
\begin{remark}
The moment condition~\eqref{M1eq} is being assumed throughout the paper. 
\end{remark}

The following two propositions reflect two important properties of constrained quantization. 

\begin{prop} \label{prop01}
	In constrained quantization for any Borel probability measure $P$, an optimal set of one-point always exists, i.e., $\C C_{1, r}(P; S)$ is nonempty. 
\end{prop}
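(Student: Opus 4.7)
The plan is to view $V_{1,r}(P)$ as the infimum of a single real-valued function of one vector argument, and to obtain attainment from the standard continuity-plus-coercivity recipe.

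Let $f:\D R^k\to[0,\infty)$ be defined by $f(a)=V_r(P;\set{a})=\int d(x,a)^r\,dP(x)$. By Proposition~\ref{prop0} (more precisely by the claim in its proof), $f(a)$ is finite for every $a\in\D R^k$, and $V_{1,r}(P)=\inf\setm{f(a)}{a\in S}$. First I would verify that $f$ is continuous on $\D R^k$. If $a_n\to a$, then $d(x,a_n)^r\to d(x,a)^r$ pointwise, and for $n$ large one has $d(0,a_n)\le d(0,a)+1$, so
\[
d(x,a_n)^r\le 2^r\bigl(d(x,0)^r+(d(0,a)+1)^r\bigr),
\]
which is $P$-integrable by the standing assumption~\eqref{M1eq}. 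Dominated convergence yields $f(a_n)\to f(a)$.

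Next I would establish coercivity, i.e.\ $f(a)\to\infty$ as $d(0,a)\to\infty$. Fix $M>0$ large enough that $P(B)\ge \tfrac12$, where $B=\setm{x\in\D R^k}{d(x,0)\le M}$; such $M$ exists since $P$ is a probability measure. For $a$ with $d(0,a)\ge 2M$ and $x\in B$, the triangle inequality gives
\[
d(x,a)\ge d(0,a)-d(x,0)\ge d(0,a)-M\ge \tfrac12 d(0,a).
\]
Hence $f(a)\ge \int_B d(x,a)^r\,dP(x)\ge 2^{-r}d(0,a)^r\cdot P(B)\ge 2^{-r-1}d(0,a)^r\to\infty$.

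Finally, pick a minimizing sequence $(a_n)\ci S$ with $f(a_n)\to V_{1,r}(P)$. By coercivity the sequence $(a_n)$ is bounded in $\D R^k$, so by the Bolzano--Weierstrass theorem it admits a subsequence converging to some $a^*$; since $S$ is closed, $a^*\in S$. Continuity of $f$ then gives $f(a^*)=V_{1,r}(P)$, so $\set{a^*}\in\C C_{1,r}(P)$, which proves the proposition.

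I expect the only mildly subtle point to be coercivity: one must pick the auxiliary ball $B$ using nothing more than the fact that $P$ is a probability measure (not finiteness of any moment other than what~\eqref{M1eq} already guarantees), and then squeeze the estimate from below with the reversed triangle inequality. Continuity and the closing compactness argument are routine once coercivity is in hand, and the closedness of the constraint set $S$ is exactly what is needed to keep $a^*$ inside $S$.
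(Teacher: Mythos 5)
Your proof is correct and follows essentially the same route as the paper's: both arguments rest on continuity of $a\mapsto\int d(x,a)^r\,dP(x)$ together with a coercivity/boundedness estimate that confines near-minimizers to a compact subset of the closed constraint set $S$. The only cosmetic differences are that you extract a convergent minimizing subsequence where the paper intersects a nested family of compact sublevel sets, and that you derive coercivity from a ball carrying $P$-mass at least $\tfrac12$ where the paper instead integrates the reversed inequality $d(0,a)^r\le 2^r\big(d(x,a)^r+d(x,0)^r\big)$ against $P$ to bound the sublevel sets.
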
 

\begin{proof}
	Let $0<r<\infty$. Define a function 
	\[\psi_r : S \to \D R^{+}: \psi_r(a)=\int d(x, a)^r\,dP(x).\]
	The function $\psi_r$ is obviously continuous. Then, for every $c\in \D R^{+}$ with 
\[c>\mathop{\inf}\limits_{b\in S} \int d(x, b)^r\,dP(x),\]
 the level sets 
	\[\set{\psi_r\leq c}:=\set{a\in S : \int d(x, a)^r\,dP(x)\leq c},\]
	are closed subsets of $S$. 
	Proceeding in the similar way, as shown in the previous proposition, for any $0<r<\infty$, we have
	\begin{align*}  
		d(0, a)^r \leq 2^r\Big( d(x, a)^r+d(x, 0)^r\Big).
	\end{align*} 
	
	Thus, for $a\in \set{\psi_r\leq c}$, we have 
	
	\begin{align*} 
		d(0, a)^r=\int d(0, a)^r \,dP(x)&\leq 2^r\Big(\int (d(x, a)^r+d(x, 0)^r) \,dP(x)\Big)\leq 2^r\Big(c+E\|X\|^r\Big),
	\end{align*}  
	i.e., \[d(0, a)\leq 2\Big(c+ E\|X\|^r\Big)^{\frac 1r} \te{ yielding } \set{\psi_r\leq c} \ci B\Big(0, 2\Big(c+ E\|X\|^r\Big)^{\frac 1r}\Big),\]
	where $E\|X\|^r=\int d(x, 0)^r \,dP(x)$.
	Hence, the level sets are bounded. As the level sets are both bounded and closed, they are compact. 
	Let us now consider a decreasing sequence $\set{c_n}$ of elements in $\D R^+$ such that 
	\begin{equation} \label{eq090} c_n>\mathop{\inf}\limits_{b\in S} \int d(x, b)^r\,dP(x) \te{ and } c_n\to \mathop{\inf}\limits_{b\in S} \int d(x, b)^r\,dP(x).
	\end{equation} 
	Then,  
	\[\set{a\in S : \int d(x, a)^r\,dP(x)\leq c_{n+1}}\ci \set{a\in S : \int d(x, a)^r\,dP(x)\leq c_n}, \te{i.e.,}\] 
	 \[\set{\psi_r\leq c_{n+1}}\ci \set{\psi_r\leq c_n}.\]
	Also, $\set{\psi_r\leq c_n}\neq \es$   for all $n\in \D N$. If this is not true, then there will exist an element $c_N$ for some $N\in \D N$ such that  
	for all $b\in S$, we have 
	\[c_N<  \int d(x, b)^r\,dP(x) \te{ yielding } c_N\leq  \inf_{b\in S} \int d(x, b)^r\,dP(x),\]
	which contradicts \eqref{eq090}. 
	Thus, we see that the level sets $\set{\psi_r\leq c_n}$
	form a nested sequence of nonempty compact sets, and hence, 
	\[\II_{n=1}^\infty\set{\psi_r\leq c_n}\neq \es.\]
	Let $a \in \mathop{\II}\limits_{n=1}^\infty\set{\psi_r\leq c_n}$. Then, 
	\[\mathop{\inf}\limits_{b\in S} \int d(x, b)^r\,dP(x)\leq \int d(x, a)^r\,dP(x)<c_n \te{ for all } n\in \D N,\]
	which by squeeze theorem implies that 
	\[\mathop{\inf}\limits_{b\in S} \int d(x, b)^r\,dP(x)=\int d(x, a)^r\,dP(x),\]
	i.e., $\set{a}$ forms an optimal set of one-point, i.e., $\set{a} \in \C C_{1, r}(P; S)$, i.e.,  $\C C_{1, r}(P; S)$ is nonempty. 
	Thus, the proof of the proposition is complete. 
\end{proof} 

\begin{defi1} Let $P$ be a Borel probability measure on $\D R^k$, and $U$ be the largest open subset of $\D R^k$ such that $P(U)=0$. Then, $\D R^k\setminus U$ is called the \tit{support of $P$}, and is denoted by $\te{supp}(P)$. 
	For a locally finite set $\ga \ci \D R^k$,  and $a\in \ga$, by $M(a|\ga)$ we denote the set of all elements in $\D R^k$ which are nearest to $a$ among all the elements in $\ga$, i.e.,
	\[M(a|\ga)=\set{x \in \D R^k : d(x, a)=\mathop{\min}\limits_{b \in \ga}d(x, b)}.\]
	$M(a|\ga)$ is called the \tit{Voronoi region} in $\D R^k$ generated by $a\in \ga$.
	The set $\set{M(a|\ga) : a \in \ga}$ is called the \tit{Voronoi diagram} or \tit{Voronoi tessellation} of $\D R^k$ with respect to the set $\ga$. Further, for $\alpha=\{a_1,a_2,\ldots\}\subseteq S$, let us define the sets $A_{a_i|\alpha}$ for $a_i\in \ga$ as follows:
	
	\begin{equation}\label{Megha1000}
		\begin{aligned}
			A_{a_{i}|\alpha}=\left\{\begin{array} {ll}
				M(a_1|\ga) & \te{ if } i=1,\\
				M(a_i|\ga)\setminus \UU_{k<i} M(a_k|\ga) & \te{ if } i\geq 2.
			\end{array}\right.
		\end{aligned}
	\end{equation} 
	The set $\set{A_{a_{i}|\alpha} : a_i\in \ga}$ is called the \tit{Voronoi partition} of $\D R^k$ with respect to the set $\ga$ (and $S$). 
\end{defi1}
\begin{prop} \label{Megha1} 
	Let $P$ be a Borel probability measure on $\D R^k$ and $S$ be a nonempty closed subset of $\D R^k$. Let $\alpha_n$ be an optimal set of $n$-points for $P$ with respect to the constraint $S$. Then, $\alpha_n$ contains exactly $n$ elements if and only if there exists a set $\ga\subseteq S$ containing at least $n$ elements such that 
	\[P(A_{a|\alpha})>0 \te{ for each } a \in \ga.\]  			
\end{prop}
\begin{proof}
	Let us first assume that there exists a set $\ga\subseteq S$ containing at least $n$ elements such that 
	\[P(A_{a|\alpha})>0 \te{ for each } a \in \ga.\]
	If $n=1$, then the proposition is a consequence of Proposition~\ref{prop01}. Let us now prove the proposition for $2\leq n $.  For the sake of contradiction, assume that $\gg:=\set{b_1, b_2, \cdots, b_m}\subseteq S$ is an optimal set of $n$-points for $P$, such that $\te{card}(\gg)=m$ for some positive integer $m<n$, i.e., 
	\[V_{n, r}(P) =\int\min_{a\in \gg} d(x, a)^r dP(x).\]
	If $m\geq 2$, then we have 
	\[V_{n, r}(P) =\sum_{j=1}^m \int_{A_{b_{j}|\gg}}  d(x, b_j)^r dP(x).\]
	Since $\te{card}(\ga)>\te{card}(\gg)$, let $c \in \ga$ be such that $c \not \in \set{b_j : 1\leq j\leq m}$. Then, $c \in A_{b_{\ell}|\gg}$ for some $1\leq \ell\leq m$. Consider the set $\gg \uu \set{c}$, we can  write 
	\[\gg \uu \set{c}=\set{b_1, b_2, \cdots, b_m, b_{m+1}}, \te{ where } b_{m+1}=c.\]
	Notice that $A_{c|\gg\uu \set{c}}$ intersects some of the Voronoi partitions of $A_{b_{j}|\gg }$ for $1\leq j\leq m$. If $A_{c|\gg\uu \set{c}}$ does not intersect $A_{b_{j}|\gg}$ for some $1\leq j\leq m$, then 
	\begin{align} \label{Meg110} \int_{A_{b_{j}|\gg}}d(x, b_j)^r dP(x)=\int_{A_{b_{j}|\gg\uu\set{c}}} d(x, b_j)^r dP(x).
	\end{align} 
	On the other hand, if $A_{c|\gg\uu \set{c}}$ intersects $A_{b_{j}|\gg}$ for some $1\leq j\leq m$, then 
	\begin{equation}\label{Meg111} 
	\begin{aligned} 
		&\int_{A_{b_j|\gg}}d(x, b_j)^r dP(x)\\
=&\int_{A_{b_j|\gg}\ii A_{c|\gg\uu \set{c}}}d(x, b_j)^r dP(x)+\int_{A_{b_j|\gg}\ii A_{c|\gg\uu \set{c}}^c}d(x, b_j)^r dP(x)\\
		>&\int_{A_{b_j|\gg}\ii A_{c|\gg\uu \set{c}}}d(x, c)^r dP(x)+\int_{A_{b_j|\gg}\ii A_{c|\gg\uu \set{c}}^c}d(x, b_j)^r dP(x). 
	\end{aligned}
\end{equation}
	For $1\leq t<m$, let $A_{c|\gg\uu \set{c}}$ intersects $A_{b_{\ell_i}|\gg}$, where $b_{{\ell_i}}\in \set{b_j : 1\leq j\leq m} $ for $i\in \set{1, 2, \cdots t}$. Then, 
	\[A_{c|\gg\uu\set{c}}=\UU_{i=1}^t A_{b_{{\ell_i}}|\gg}\ii A_{c|\gg\uu \set{c}}.\]
	Hence, by the expressions in \eqref{Meg110} and \eqref{Meg111}, we have 
	\begin{align*}
		V_{n, r}(P)=&\int\min_{a\in \gg} d(x, a)^r dP(x)\\
		>&  \sum_{j\in (\set{1, 2, \cdots, m}\setminus\set{\ell_1, \ell_2, \cdots, \ell_t})}\int_{A_{b_j|\gg\uu\set{c}}} d(x, b_j)^r dP(x)\\
		&+\sum_{i=1}^t \Big(\int_{A_{b_{{\ell_i}}|\gg}\ii A_{c|\gg\uu \set{c}}}d(x, c)^r dP(x)+\int_{A_{b_{{\ell_i}}|\gg}\ii A_{c|\gg\uu \set{c}}^c}d(x, b_{{\ell_i}})^r dP(x)\Big)\\
		=&  \sum_{j\in (\set{1, 2, \cdots, m}\setminus\set{\ell_1, \ell_2, \cdots, \ell_t})}\int_{A_{b_j|\gg\uu\set{c}}} d(x, b_j)^r dP(x)+  \int_{A_{c|\gg\uu \set{c}}}d(x, c)^r dP(x)\\
		&  +\sum_{i=1}^t\int_{A_{b_{{\ell_i}}|\gg}\ii A_{c|\gg\uu \set{c}}^c}d(x, b_{{\ell_i}})^r dP(x)\\
		=&  \sum_{j\in (\set{1, 2, \cdots, m}\setminus\set{\ell_1, \ell_2, \cdots, \ell_t})}\int_{A_{b_j|\gg\uu\set{c}}} d(x, b_j)^r dP(x)+  \int_{A_{c|\gg\uu \set{c}}}d(x, c)^r dP(x)\\
		&+\sum_{i=1}^t\int_{A_{b_{{\ell_i}}|\gg\uu \set{c}}}d(x, b_{{\ell_i}})^r dP(x)\\
		=&\int\min_{a\in \gg\uu \set{c}} d(x, a)^r dP(x)\geq V_{n, r}(P),
	\end{align*} 
	where the last inequality is true since $\gg\uu \set{c}$ contains no more than $n$ elements. Thus, we see that a contradiction arises. Hence, an optimal set of $n$-points contains exactly $n$ elements. 
	
	Next, assume that $\ga_n$ is an optimal set of $n$-points for $P$ with respect to the constraint $S$ such that $\ga_n$ contains exactly $n$ elements. We need to show that
	\[P(A_{a|\alpha_n})>0 \te{ for each } a \in \ga_n.\]  
	For the sake of contradiction, let $\gg$ be the nonempty maximal subset of $\ga_n$ such that $P(A_{b|\ga_n})=0 \te{ for each } b \in \gg$
	and $P(A_{b|\ga_n})>0 \te{ for each } b \in \ga_n\setminus \gg$. Since optimal set of one-point always exists, hence $\gg\neq \ga_n$. Let $\te{card}(\gg)=m$, and then $\te{card}(\ga_n\setminus \gg)=n-m$. Thus, we see that 
	\begin{align*}
		V_{n, r}(P) &=\int\min_{a\in \ga_n} d(x, a)^r dP(x)
		=\sum_{a\in \ga_n} \int_{A_{a|\ga_n}} d(x, a)^r dP(x)\\
		=&\sum_{a\in \ga_n\setminus\gg} \int_{A_{a|\ga_n\setminus \gg}} d(x, b)^r dP(x),
	\end{align*} 
	which implies that $\ga_n\setminus \gg$ is an optimal set of $n$-points such that $\te{card }(\ga_n\setminus \gg)=n-m<n$, which contradicts our assumption. Thus, we see that if $\ga_n$ is an optimal set of $n$-points containing exactly $n$ elements, then
	\[P(A_{a|\ga_n})>0 \te{ for each } a \in \ga_n.\]
	Thus, the proof of the proposition is complete.			
\end{proof}

The following corollaries are direct consequences of Proposition~\ref{Megha1}. 

\begin{cor}
	Let $P$ be a Borel probability measure on $\D R^k$ and $S$ be a nonempty closed subset of $\D R^k$ such that there exists a set $\ga\subseteq S$ containing at least $N$ elements for some positive integer $N$ such that
	\[P(A_{a|\alpha})>0 \te{ for each } a \in \ga.\]
	Then, the sequence $\set{V_{n, r}(P)}_{n=1}^N$ is strictly decreasing, i.e., $V_{n-1,r}(P)>V_{n,r}(P)$ for all $2\leq n\leq N$, 
	where $V_{n, r}(P)$ represents the $n$th constrained quantization error with respect to the constraint $S$.  
\end{cor}   
\begin{cor}
		Let $P$ be a Borel probability measure on $\D R^k$ and $S$ be a nonempty closed subset of $\D R^k$. Let $\ga$ be an optimal set of $n$-points containing exactly $n$ elements for $P$ with respect to the constraint $S$ and $a\in \ga$. Then, $P(M(a|\ga))>0$.
\end{cor}

Although the following proposition has already been established in \cite{GL}, we provide an alternative proof based on Proposition~\ref{Megha1}. 

\begin{prop} \label{Megha2} 
	Let $P$ be a Borel probability measure on $\D R^k$, if $S=\D R^k$, i.e., when there is no constraint, then an optimal set of $n$-points for $P$ contains exactly $n$ elements if and only if the $\te{supp}(P)$ contains al least $n$ elements. 			
\end{prop}
\begin{proof}
	If an optimal set of $n$-points contains exactly $n$ elements, then it is easy to observe that $\te{supp}(P)\geq n$. Next, assume that $\te{supp}(P)\geq n$. We need to prove that an optimal set of $n$-points for $P$ contains exactly $n$ elements. In view of Proposition \ref{Megha1}, it is sufficient to prove that there exists a subset $\alpha\ci \D R^k$ containing at least $n$ elements such that $P(A_{a|\alpha})>0 \te{ for all } a\in \alpha$. For the sake of contradiction, let $\gamma=\{a_1,a_2,\ldots a_m\}\ci \D R^k$ be a subset of maximal element $m<n$ such that $P(A_{a_i|\gamma})>0$ for all $a_i\in \gamma$. Since $\te{supp}(P)\geq n$, there exists $b\in \te{supp}(P)$ with $b\notin \gamma$ such that $b\in A_{a_{\ell}|\gamma}$ for some $1\leq \ell\leq m$. Then, proceeding analogously as the proof of Proposition \ref{Megha1}, we can prove that $\gamma\cup\{b\}$ is a set of $m+1$ elements such that $P(A_{a|\gamma\cup\{b\}})>0$ for all $a\in \gamma\cup \{b\}$, which gives a contradiction. Thus, we deduce that an optimal set of $n$-points contains exactly $n$ elements. Hence, the proof of the  proposition is complete. 
\end{proof} 

The following proposition is a standard result in quantization theory (see \cite{GL}).  However, for the sake of completeness, we provide a proof here.

\begin{prop}\label{optimalnmeans}
	Let $P$ be a Borel probability measure on $\D R^k$, if $S=\D R^k$, i.e., when there is no constraint, then the elements in an optimal set of $n$-points are the conditional expectations in their own Voronoi regions provided that the $\te{supp}(P)$ contains at least $n$ elements. 
\end{prop}
\begin{proof}
	Let $\alpha_n$ be an optimal set of $n$-points for $P$ and $\set{A_{a|\alpha_n} : a \in \ga_n}$ be the  Voronoi partition of $\D R^k$ with respect to the set $\ga_n$. Let $V_{n,r}(P)$ be the corresponding $n$th quantization error. Then, 
	\begin{align*}
		V_{n,r}(P)= \int \min_{a\in \alpha_n}d(x,a)^r\ dP(x)
		=\sum_{a\in \alpha_n}\int_{A_{a|\alpha_n}}d(x,a)^r\ dP(x). 
	\end{align*}
	Notice that $V_{n,r}(P)$ will be minimum if the function \[\mathsf{F}_{r}(a):=\int_{A_{a|\alpha_n}} d(x,a)^r \ dP(x)\]
	is minimum for each $a\in \ga_n$.  
	Notice that the value of $a$, where the function $F_r(a)$ is minimum, does not depend on $r$ because for any $u, v, w, z\in\D R^k$ and $r>0$, we have  
	\begin{align*}
		d(u, v)\leq d(w, z) \te{ if and only if } d(u, v)^r\leq d(w, z)^r.
	\end{align*}
	Hence, for simplicity, we calculate the value of $a$, where $F_r(a)$ is minimum, for $r=2$. For this, first we compute gradient $\nabla \mathsf{F}_2(a)$. We have
	\begin{align*}
		\nabla \mathsf{F}_2(a) = \frac{d}{da} \int_{A_{a|\alpha_n}} d(x,a)^2 \, dP(x)
		=& \frac{d}{da} \int_{A_{a|\alpha_n}} \|x - a\|^2 \, dP(x)\\
	    =& \int_{A_{a|\alpha_n}} \nabla_a \|x - a\|^2 \, dP(x).
	\end{align*}
	Since,
	\[\nabla_a \|x - a\|^2 = \nabla_a \left[ (x - a)^T (x - a) \right] = -2(x - a),\]
	where $x^T$ gives the transpose of $x\in \mathbb R^k$, we have 
	\[\nabla \mathsf{F}_2(a) = \int_{A_{a|\alpha_n}} -2(x - a) \, dP(x) = -2 \left( \int_{A_{a|\alpha_n}} x \, dP(x) - a \cdot P(A_{a|\alpha_n}) \right).\]
	Set this to zero to find the minimizer:
	\[-2 \left( \int_{A_{a|\alpha_n}} x \, dP(x) - a \cdot P(A_{a|\alpha_n}) \right) = 0.\]
	Solve for \( a \):
	\[a = \frac{1}{P(A_{a|\alpha_n})} \int_{A_{a|\alpha_n}} x \, dP(x) = E(X \mid X \in A_{a|\alpha_n}).\]
	Hence, the elements in an optimal set of $n$-points are the conditional expectations in their own Voronoi regions. Thus, the proof of the proposition is complete. 
\end{proof}

\begin{remark}
Let $P$ be a Borel probability measure on $\D R^k$ and $S$ be a nonempty closed subset of $\D R^k$. Let $\ga$ be an optimal set of $n$-points for $P$ with respect to the constraint $S$ and $a\in \ga$. If the probability measure $P$ is absolutely continuous with respect to the Lebesgue measure on $\mathbb{R}^k$, then it is easy to observe that $ P(\partial M(a|\ga))=0$, where $\partial M(a|\ga)$ represents the boundary of the Voronoi region $M(a|\ga)$. However, when $P$ is not absolutely continuous, this property may fail, even in constrained quantization, as shown in the following example.
\end{remark}

\begin{exam} 
Let $P$ be a Borel probability measure on $\D R$ which is discrete and uniform on its support $\set{1, 2, 3, 4, 5, 6, 7}$. Let us take the constraint as 
\[S=\set{(2.5, 1), (5.5, 1)}\uu \set{(x, 1) : x\in\D R \te{ and } x\geq 12}.\]
Let $\ga_n$ denote a constrained optimal set of $n$-points with respect to the constraint $S$ for the probability distribution $P$. Then,  
 \begin{align*}
 \ga_1&=\set{(2.5, 1)}, \te{ or } \set{(5.5, 1)},  \te{ and } \ga_2=\set{(2.5, 1), (5.5, 1)}.
 \end{align*} 
Write $a_1=(2.5, 1)$ and $a_2=(5.5, 1)$. Then, notice that $P(\partial M(a_1|\ga_2))=P(\partial M(a_2|\ga_2))=P(\set{4})=\frac 17 \neq 0$, exactly as intended. 
\end{exam}

In view of Proposition \ref{optimalnmeans}, in the case of unconstrained quantization, the elements in an optimal set are the conditional expectations in their own Voronoi regions. However, as will be demonstrated in later sections, this characterization does not generally hold in the context of constrained quantization. Because of that, in the case of constrained quantization,  a set $\ga$ for which the infimum in \eqref{Vr} exists and contains no more than $n$ elements is called an \tit{optimal set of $n$-points} instead of calling it as an \tit{optimal set of $n$-means}. Elements of an optimal set are called \tit{optimal elements}. In unconstrained quantization, as described in \cite {GL}, if the support of $P$ contains at least $n$ elements, then an optimal set of $n$-means always contains exactly $n$ elements. However, this property does not carry over to constrained quantization. In particular, while an optimal set of one-point containing exactly one element in the constrained setting always exists, an optimal set of $n$-points containing exactly $n$ elements for $n\geq 2$ may not exists, even if the support of $P$ has at least $n$ elements. Notice that unconstrained quantization, as described in \cite{GL}, is a special case of constrained quantization. Nonetheless, there are some properties that hold in the unconstrained case, and do not extend to the constrained setting.

This paper deals with $r=2$ and $k=2$, and the metric on $\D R^2$ as the Euclidean metric induced by the Euclidean norm $\|\cdot\|$. Instead of writing $V_r(P; \ga)$ and $V_{n, r}:=V_{n, r}(P)$ we will write them as $V(P; \ga)$ and $V_n:=V_{n}(P)$, i.e., $r$ is omitted in the subscript as $r=2$ throughout the paper.

\subsection{Motivation and Work Done}
There are several research works in the literature on unconstrained quantization (see, for instance, 
\cite{GG, GL, GN, GL1, Z1, Z2, P, P1, R1, R2, R3, R4, RS}), and it has proved effective in solving problems 
for various probability distributions, such as uniform and self-similar distributions. However, in many 
real-life situations, quantization is subject to spatial or geometric restrictions.

A key motivating example arises in radiation therapy planning. In such applications, radiation beams 
or sampling points cannot be placed arbitrarily in space; instead, they must avoid sensitive or healthy 
tissue. Mathematically, this restriction can be modeled by introducing a constraint set $S \subset \mathbb{R}^k$ 
representing the \emph{admissible region}, where quantizer points are allowed to lie. The complement 
$\mathbb{R}^k \setminus S$ can then be interpreted as a \emph{forbidden region}, corresponding to organs 
or tissue that must not be directly exposed to radiation. Consequently, the optimization problem becomes 
one of minimizing quantization error subject to the requirement that all optimal elements lie in $S$.

This naturally leads to the notion of \emph{constrained quantization}, where the locations of optimal 
quantizer points are restricted by geometric, physical, or safety considerations. In this paper, we introduce 
and analyze constrained quantization for probability distributions by defining the constrained 
quantization error, constrained quantization dimension, and constrained quantization coefficient. 
We further compute optimal sets of $n$-points for several distributions under different constraint 
geometries, illustrating how constraints significantly alter both optimal configurations and asymptotic 
behavior.

To make the theoretical consequences of constrained quantization concrete, we focus on 
geometrically simple yet structurally informative model cases, such as uniform distributions 
supported on a line segment, a circle, and a chord, together with natural constraint sets defined 
by lines or circles. These examples permit explicit analytical computation of optimal configurations 
while highlighting phenomena unique to the constrained setting, including geometric mismatch 
between the support of the distribution and the admissible quantizer locations, potential 
degeneracy of optimal sets, and strong sensitivity of quantization behavior to constraint geometry.

\subsection{Relation to constrained optimization, facility location, and CVT literature.}
 The constrained quantization problem considered here can be viewed as a continuous
facility-location or clustering problem in which one seeks a finite set of ``facilities''
$\alpha \subset S$ minimizing the expected squared distance
$\int \min_{a\in \alpha}\|x-a\|^2\,dP(x)$, subject to the geometric feasibility constraint
$\alpha \subset S$. This places our framework in close relation to classical continuous
$k$-means and $k$-median type objectives, as well as facility-location models in operations
research and spatial optimization, where admissible facility positions are restricted by
geometric, feasibility, or safety considerations \cite{GN, MacQueen1967,Lloyd1982,LoveMorrisWesolowsky1988,  DreznerHamacher2004}.

There is also a strong conceptual connection with centroidal Voronoi tessellations (CVTs)
and their constrained variants (CCVTs), in which generators are restricted to lie on a
prescribed region (often a curve or surface), and one seeks minimizers of Voronoi-based
energy functionals \cite{DFG, DuWang2005, DuGunzburgerJu2003}.
In contrast to the standard CVT/CCVT setting, our framework allows the probability measure
$P$ to be supported on a set that may be geometrically different from the constraint set $S$
(e.g., a line segment, chord, or circle for the support versus a distinct curve or surface
for the admissible quantizer locations). This ``mismatched geometry'' between
$\operatorname{supp}(P)$ and $S$ is a key source of the phenomena we highlight:
(i) optimal sets in constrained quantization need not satisfy the classical centroid
(conditional expectation) characterization from the unconstrained case,
(ii) optimal sets of $n$-points may fail to contain $n$ distinct points (and may not exist
for certain $n$), and (iii) the constrained quantization dimension and coefficient can depend
strongly on the geometry of $S$ even when $\operatorname{supp}(P)$ is fixed
\cite{GL, Z1}.

These features position constrained quantization as a geometric extension of classical
quantization theory that complements, but is not encompassed by, the standard CVT/CCVT
literature. Beyond computing optimal configurations for concrete constrained geometries,
our work also develops asymptotic invariants (the constrained quantization dimension and
coefficient) that quantify how geometric constraints fundamentally alter quantization rates.
\subsection{Delineation}
The organization of the paper is as follows. Section~\ref{sec000} presents the preliminaries that will be used throughout the paper. Section~\ref{sec1} studies constrained quantization for the uniform distribution supported on a closed interval $[a,b]$, where the optimal elements are restricted to lie on another line segment. Section~\ref{sec2} investigates constrained quantization for the uniform distribution supported on a circle, with optimal elements constrained to lie on another circle. In Section~\ref{sec3}, we analyze the case where the uniform distribution is supported on a chord of a circle, while the optimal elements lie on the circle itself. Section~\ref{sec5} introduces the notions of constrained quantization dimension and constrained quantization coefficient, and through several examples highlights the differences between constrained and unconstrained quantization dimensions and coefficients. Finally, Section~\ref{sec6} outlines future directions and open problems arising from the work presented in this paper.

\section{Preliminaries} \label{sec000}
For any two elements $(a, b)$ and $(c, d)$ in $\D R^2$, we write 
\[\rho((a, b), (c, d)):=(a-c)^2 +(b-d)^2,\] which gives the squared Euclidean distance between the two elements $(a, b)$ and $(c, d)$.
Two elements $p$ and $q$ in an optimal set of $n$-points are called \tit{adjacent elements} if they have a common boundary in their own Voronoi regions. Let $e$ be an element on the common boundary of the Voronoi regions of two adjacent elements $p$ and $q$ in an optimal set of $n$-points. Since the common boundary of the Voronoi regions of any two elements is the perpendicular bisector of the line segment joining the elements, we have
\[\rho(p, e)-\rho(q, e)=0. \]
We call such an equation a \tit{canonical equation}. 
\begin{fact} \label{fact}
	Let $P$ be a Borel probability measure on $\D R^2$ with support a curve $C$ given by the parametric representations
	\[x=F(t) \te{ and } y=G(t), \te{ where }  a\leq t\leq b.\]
	Let us fix a point on the curve $C$. Let $s$ be the distance of a point on the curve tracing along the curve starting from the fixed point. Then, 
	\begin{equation} \label{eq000} ds=\sqrt{(dx)^2+(dy)^2}=\sqrt{(F'(t))^2+(G'(t))^2} |dt|,
	\end{equation}
	where $d$ stands for differential.  
	Notice that $|dt|=dt$ if $t$ increases, and $|dt|=-dt$ if $t$ decreases. Then, 
	\[dP(s)=P(ds)=f(x, y) ds,\]
	where $f(x, y)$ is the probability density function for the probability measure $P$, i.e., $f(x, y)$ is a real-valued function on $\D R^2$ with the following properties:
	$f(x, y)\geq 0$ for all $(x, y)\in \D R^2$, and
	\begin{align*}
	\int_{\D R^2}f(x, y)\, dA(x, y)=&\int_{\D R^2\setminus C}f(x, y)\, dA(x, y)+\int_{C}f(x, y)\, dA(x, y)\\
	=&\int_{C}f(x, y)\, dA(x, y)=1,
	\end{align*}
	where for any $(x, y) \in \D R^2$ by $dA(x, y)$, it is meant the infinitesimal area $dxdy$, and if $(x, y) \in C$, then by $dA(x,y)$, it is meant the infinitesimal length $ds$ given by \eqref{eq000}. 
\end{fact}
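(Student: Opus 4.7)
The plan is to decouple the Fact into its two independent assertions and treat each with a standard argument, then address the slight notational overloading in the statement itself, which will be the real subtlety.

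First, for the arc length differential \eqref{eq000}, I would fix a reference point on $C$ corresponding to some parameter value $t_0\in[a,b]$ and define the arc length function
\[
s(t)=\int_{t_0}^{t}\sqrt{(f'(\tau))^2+(g'(\tau))^2}\,d\tau,
\]
assuming $f$ and $g$ are continuously differentiable. The justification that $s(t)$ really measures length along $C$ is the classical polygonal-approximation argument: inscribe partitions $t_0<t_1<\cdots<t_N=t$, apply the mean value theorem to each chord $\|(f(t_i),g(t_i))-(f(t_{i-1}),g(t_{i-1}))\|$, and take the limit as the mesh tends to zero; uniform continuity of $f',g'$ on $[a,b]$ makes the Riemann sum converge to the integral above. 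Differentiating with respect to $t$ gives $ds/dt=\sqrt{(f'(t))^2+(g'(t))^2}$ when $t$ is increasing, and the absolute value $|dt|$ simply accommodates a reversed traversal, producing \eqref{eq000} in differential form.

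Second, for the mass identity, the key observation is that $C$ has two-dimensional Lebesgue measure zero, so $f(x,y)$, which the statement introduces as a function on $\D R^2$, cannot be a density with respect to planar Lebesgue measure in the ordinary sense on $C$; it must be interpreted as a density with respect to the one-dimensional arc length (Hausdorff) measure on $C$. With that reading, $P$ is absolutely continuous with respect to arc length on $C$, and the Radon--Nikodym theorem gives exactly $dP(s)=f(x,y)\,ds$. The total-mass equation then splits as
\[
1=P(\D R^2)=P(\D R^2\setminus C)+P(C)=0+\int_C f(x,y)\,ds,
\]
because $P$ is supported on $C$.

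The main obstacle is therefore not analytical but interpretational: the symbol $dA(x,y)$ is deliberately overloaded in the statement, denoting planar area off $C$ and the line element $ds$ on $C$. The cleanest presentation would make this convention explicit at the outset, after which the first assertion reduces to the standard arc length formula for a $C^1$ parametric curve and the second reduces to the Radon--Nikodym representation of a measure supported on a rectifiable curve. Once the convention is fixed, no further work is required.
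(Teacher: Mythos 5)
Your proposal is correct, but note that the paper supplies no proof of this Fact at all: the \texttt{fact} environment is declared under \verb|\theoremstyle{definition}|, and the statement functions as a recollection of standard conventions (the arc-length differential for a $C^1$ parametric curve, and the meaning of a density supported on a curve) that the authors simply invoke later when computing distortion integrals such as $dP(s)=f(x,0)\,dx$. What you add --- the polygonal-approximation derivation of $s(t)=\int_{t_0}^t\sqrt{(f'(\tau))^2+(g'(\tau))^2}\,d\tau$ and the Radon--Nikodym reading of $f(x,y)$ as a density with respect to one-dimensional Hausdorff (arc-length) measure on $C$, with $P(\mathbb{R}^2\setminus C)=0$ killing the off-curve term --- is a sound and standard justification, and it correctly identifies the one genuine subtlety, namely that $C$ is planar-Lebesgue null so the density must be taken with respect to arc length rather than area. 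Two small remarks: the interpretational convention you say ``would'' need to be made explicit is in fact already spelled out in the last sentence of the statement ($dA(x,y)$ means $dx\,dy$ off $C$ and $ds$ on $C$), so there is no gap to fill there; and your argument implicitly assumes $f,g\in C^1$, which the statement does not announce but clearly presupposes by writing $f'(t),g'(t)$. Nothing in your sketch would fail; it is simply more than the paper chooses to do.
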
 

\section{Constrained quantization when the support lies on a line segment and the optimal elements lie on another line segment} \label{sec1}
Let $a, b\in \D R$ with $a<b$, and $c, m\in \D R$. Let $L$ be a line given by $y=mx+c$, the parametric representation of which is
\[L:=\set{(x, mx+c) : x\in \D R}.\]
Let $P$ be a Borel probability measure on $\D R^2$ such that $P$ is uniform on its support $\set{(x, y) \in \D R^2 : a\leq x\leq b \te{ and } y=0}$.
Then, the probability density function $f$ for $P$ is given by
\[f(x, y)=\left\{\begin{array}{ccc}
	\frac 1 {b-a} & \te{ if } a\leq x\leq b \te{ and } y=0,\\
	0  & \te{ otherwise}.
\end{array}\right.
\]
Recall Fact~\ref{fact}. Here we have  $dP(s)=P(ds)=P(dx)=dP(x)=f(x, 0)dx$. In this section, we determine the optimal sets of $n$-points and the $n$th constrained quantization errors for the probability measure $P$ for all positive integers $n$ so that the elements in the optimal sets lie on the line $L$ between the two elements $(d, md+c)$ and $(e, me+c)$, where $d, e\in \D R$ with $d<e$.
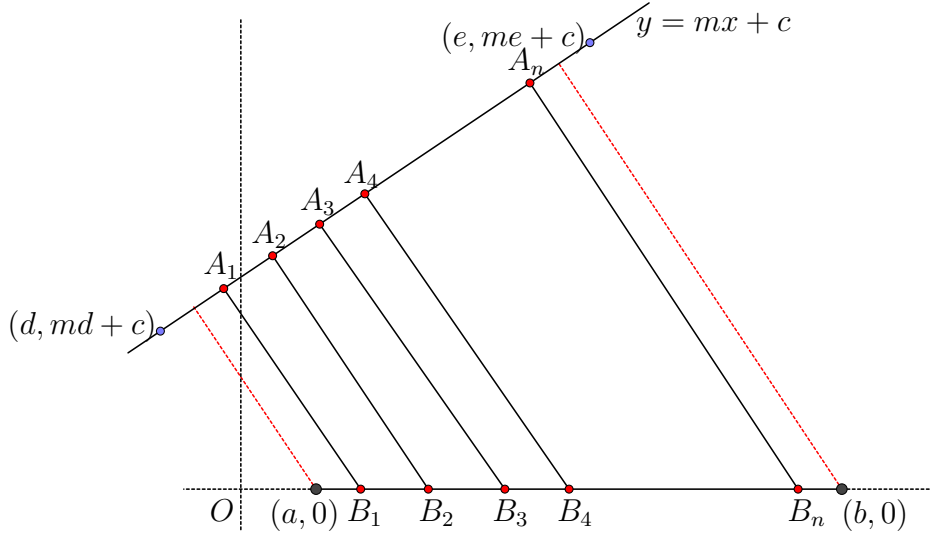
\begin{figure}
	\begin{tikzpicture}[line cap=round,line join=round,>=triangle 45,x=1.0cm,y=1.0cm]
		\clip(-3.626547028728845,-0.6796800380732857) rectangle (10.543925134766551,10.012054583893859);
		\draw [line width=0.59 pt] (1.,0.)-- (8.,0.);
		\draw [line width=0.59 pt,dash pattern=on 1pt off 1pt,color=ffqqqq] (-0.6250355564097686,2.4237324620411176)-- (1.,0.);
		\draw [line width=0.59 pt,dash pattern=on 1pt off 1pt,color=ffqqqq] (4.2336997885835,5.652638862194866)-- (8.,-0.015375744762637034);
		\draw [line width=0.59 pt] (0.4051393426869056,3.1310167211224087)-- (2.4962406304055276,-0.005635210455521756);
		\draw (-0.642493872582671,3.2984031190796207) node[anchor=north west] {$A_1$};
		\draw (-0.0015294106879773675,3.7016408116196375) node[anchor=north west] {$A_2$};
		\draw (0.6427875126987127,4.148770581074987) node[anchor=north west] {$A_3$};
		\draw (1.2626727437467337,0.011135791621818) node[anchor=north west] {$B_1$};
		\draw (0.2361973586666987,0.0111232942533151627) node[anchor=north west] {$(a, 0)$};
		\draw (7.84108362391646,0.01113454527150085072) node[anchor=north west] {$(b, 0)$};
		\draw (2.218011513504107,0.01111540056560775146) node[anchor=north west] {$B_2$};
		\draw (3.2016740525154817,0.0111540056560775146) node[anchor=north west] {$B_3$};
		\draw (3.4016740525154817,6.003397581981053) node[anchor=north west] {$A_n$};
		\draw (7.122546900597628,0.011140056560775146) node[anchor=north west] {$B_n$};
		\draw (-.5559037185506569, 0.0122150224684905) node[anchor=north west] {$O$};
		\draw [line width=0.59 pt] (-1.4995662575620319,1.813121733798239)-- (5.4268406688448865,6.468311505267068);
		\draw [line width=0.59 pt,dash pattern=on 1pt off 1pt] (0.,-0.5386350365631762)-- (0.,6.242800582081727);
		\draw (5.1091926745558871,6.487771889843732) node[anchor=north west] {$y=mx+c$};
		\draw [line width=0.59 pt] (3.843439288855313,5.404118484762449)-- (7.408115208258959,0.);
		\draw [line width=0.59 pt] (1.0477018762287615,3.5251228748808865)-- (3.5099978217694834,0.);
		\draw [line width=0.59 pt] (1.648502605186379,3.928916853180308)-- (4.363717745256848,0.);
		\draw [line width=0.59 pt,dash pattern=on 1pt off 1pt] (1.,0.)-- (-0.7586017956673382,0.);
		\draw [line width=0.59 pt,dash pattern=on 1pt off 1pt] (7.98978307665631,0.)-- (9.324958055334362,0.);
		\draw (1.2193202822547374, 4.501468658191668) node[anchor=north west] {$A_4$};
		\draw (4.0587464374948425,0.0111568188682351291) node[anchor=north west] {$B_4$};
		\draw [line width=0.59 pt] (-0.2277513914307765,2.6678996321980817)-- (1.5931549746940805,0.);
		\draw (-3.2454446427427414,2.531273349624271) node[anchor=north west] {$(d, md+c)$};
		\draw (2.513889898684816,6.353282813029073) node[anchor=north west] {$(e, me+c)$};
		\begin{scriptsize}
			\draw [fill=ffqqqq] (0.42252149052143523,3.104943499370614) circle (1.5pt);
			\draw [fill=ffqqqq] (3.8434392888553126,5.404118484762449) circle (1.5pt);
			\draw [fill=ffqqqq] (7.408115208258959,0.) circle (1.5pt);
			\draw [fill=ffqqqq] (2.4924838234351796,0.) circle (1.5pt);
			\draw [fill=ffqqqq] (1.0477018762287618,3.525122874880887) circle (1.5pt);
			\draw [fill=ffqqqq] (3.509997821769484,0.) circle (1.5pt);
			\draw [fill=ffqqqq] (1.6485026051863785,3.928916853180307) circle (1.5pt);
			\draw [fill=ffqqqq] (4.363717745256848,0.) circle (1.5pt);
			\draw [fill=uuuuuu] (1.,0.) circle (2.0pt);
			\draw [fill=uuuuuu] (7.98978307665631,0.) circle (2.0pt);
			\draw [fill=ffqqqq] (-0.2277513914307767,2.6678996321980817) circle (1.5pt);
			\draw [fill=ffqqqq] (1.5931549746940807,0.) circle (1.5pt);
			\draw [fill=xdxdff] (-1.0695639663044743,2.102123273736458) circle (1.5pt);
			\draw [fill=xdxdff] (4.640868525937184,5.9400651115453815) circle (1.5pt);
		\end{scriptsize}
	\end{tikzpicture}
	\caption{Support of the probability distribution $P$ is the closed interval joining the points $(a, 0)$ and $(b, 0)$; $A_i (a_i, ma_i+c)$ are the elements in an optimal set of $n$-points lying on the line $y=mx+c$ between the two points $(d, md+c)$ and $(e, me+c)$; $B_i ((m^2+1) a_i+mc, 0)$ are the points where the perpendiculars through $A_i$ on the line $y=mx+c$ intersect the support of $P$.} \label{Fig0}
\end{figure}
Let us now give the following theorem.
\begin{theorem}\label{sec2Theorem1}
	Let $P$ be a Borel probability measure on $\D R^2$ such that $P$ is uniform on its support $\set{(x, y) \in \D R^2 : a\leq x\leq b \te{ and } y=0}$. For $n\in \D N$ with $n\geq 2$, let $\ga_n:=\{(a_i,m a_i+c): 1\leq i \leq n\}$ be an optimal set of $n$-points for $P$ so that the elements in the optimal sets lie on the line $L$ between the two elements $(d, md+c)$ and $(e, me+c)$ (see Figure~\ref{Fig0}), where $d, e\in \D R$ with $d<e$. Assume that
	\[\max\set{a, (m^2+1)d+mc}=a \te{ and } \min\set{b, (m^2+1) e+mc}=b.\]
	Then, $a_i=\frac {2i-1}{2n(1+m^2)}(b-a)+\frac{a-cm}{1+m^2}$ for $1\leq i\leq n$ with constrained quantization error
	\[V_2=\frac{a^2 \left(16 m^2+1\right)+2 a b \left(8 m^2-1\right)+48 a c m+b^2 \left(16 m^2+1\right)+48 b c m+48 c^2}{48 \left(m^2+1\right)}\]
	and for $n\geq 3$, we have
	\begin{align*}
		V_n=&\frac 1{12 \left(m^2+1\right) n^3}\Big(-48 (a - b)^2 m^2 + (a - b) (a - b + 72 c m + 8 (11 a - 2 b) m^2) n \\
		&\qquad \qquad -
		12 (a - b) m (5 c + (4 a + b) m) n^2 + 12 (c + a m)^2 n^3\Big).
	\end{align*} 
\end{theorem}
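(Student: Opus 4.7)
The plan is to reduce the constrained quantization problem to a one-dimensional unconstrained quantization of the uniform distribution on $[a,b]$ via an orthogonal decomposition of the squared distance, solve the reduced problem by the classical result, and translate the answer back. I parameterize the elements of the optimal set as $A_i=(a_i,\,ma_i+c)$ with $a_1<a_2<\cdots<a_n$, and set $t_i:=(m^2+1)a_i+mc$, so that $B_i=(t_i,0)$ is the foot of the perpendicular dropped from $A_i$ to the $x$-axis along the direction perpendicular to $L$. A direct algebraic manipulation yields the orthogonal identity
\[
(x-a_i)^2+(ma_i+c)^2=\frac{(mx+c)^2+(x-t_i)^2}{m^2+1},\qquad x\in \D R,
\]
in which the first summand on the right is the squared perpendicular distance from $(x,0)$ to $L$ (independent of $a_i$), and the second is the squared one-dimensional distance along $L$ between $B_i$ and the foot of perpendicular from $(x,0)$ to $L$.

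Next I would determine the Voronoi cells on the support. The canonical equation $\rho(A_i,(t,0))=\rho(A_{i+1},(t,0))$ simplifies to $t=(t_i+t_{i+1})/2$; under the hypothesis $\max\{a,(m^2+1)d+mc\}=a$ and $\min\{b,(m^2+1)e+mc\}=b$, the admissible range $[(m^2+1)d+mc,\,(m^2+1)e+mc]$ of the $t_i$ covers $[a,b]$, so the Voronoi cells on the support form a partition $\{[\ga_i,\gb_i]\}_{1\leq i\leq n}$ with $\ga_1=a$, $\gb_n=b$, and $\gb_i=\ga_{i+1}=(t_i+t_{i+1})/2$. Substituting into the distortion and using the identity above,
\[
V_n=\frac{1}{(b-a)(m^2+1)}\Biggl[\int_a^b(mx+c)^2\,dx+\sum_{i=1}^n\int_{\ga_i}^{\gb_i}(x-t_i)^2\,dx\Biggr];
\]
the first integral does not depend on the $t_i$, so minimizing $V_n$ is equivalent to minimizing the second sum, which is precisely the squared distortion for the one-dimensional unconstrained quantization of the uniform distribution on $[a,b]$ by the $n$-point set $\{t_1,\ldots,t_n\}$.

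By the classical result (see \cite{GL}) the unique minimizer of the reduced problem is the equispaced configuration $t_i=a+(2i-1)(b-a)/(2n)$, whose minimum value is $(b-a)^3/(12n^2)$; the hypothesis guarantees that each such $t_i$ lies in the admissible interval, so the constrained and unconstrained minimizers coincide. Inverting $a_i=(t_i-mc)/(m^2+1)$ yields the stated formula for the $a_i$, and substituting the optimal $t_i$ into the display for $V_n$ together with the elementary expansion of $\int_a^b(mx+c)^2\,dx$ produces the claimed polynomial form after routine algebra. The principal obstacle is discovering the orthogonal decomposition identity; once it is available, the reduction to classical one-dimensional uniform quantization is immediate. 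A secondary point that needs to be checked is that the first-order condition characterises the global minimum rather than a saddle critical point, which follows from the strict convexity of the reduced quantization functional in the $t_i$ under a fixed ordering, combined with the compactness of the admissible constraint segment.
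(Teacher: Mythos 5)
Your reduction is sound and is a genuinely different route from the paper's. The paper works directly in the coordinates $a_i$: it writes the distortion as a sum of integrals over the Voronoi cells, sets $\frac{\partial}{\partial a_i}V(P;\ga_n)=0$ to obtain equispacing, and then evaluates the resulting sum. Your Pythagorean identity $(x-a_i)^2+(ma_i+c)^2=\frac{(mx+c)^2+(x-t_i)^2}{m^2+1}$ with $t_i=(m^2+1)a_i+mc$ is correct (it is exactly the decomposition of $\|(x,0)-A_i\|^2$ into the squared distance from $(x,0)$ to $L$ plus the squared distance along $L$ to $A_i$ from the foot of the perpendicular), it splits off a term independent of the $t_i$, and it reduces the whole problem to classical one-dimensional quantization of the uniform distribution on $[a,b]$; the hypothesis on $d,e$ is used exactly as you say, to make the unconstrained $1$-D optimizer admissible. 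This yields the stated $a_i$ correctly and more transparently than the paper does.

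However, your final sentence does not close as claimed, and the reason is instructive: carrying out the ``routine algebra'' on your own display gives
\[
V_n=\frac{1}{m^2+1}\Bigl(\frac{m^2(a^2+ab+b^2)}{3}+mc(a+b)+c^2\Bigr)+\frac{(b-a)^2}{12(m^2+1)n^2},
\]
which is \emph{not} the polynomial stated in the theorem: the two expressions coincide for $n=2,3$ but not in general for $n\ge 4$ (the theorem's formula has a nonzero $1/n$ term and constant term $(c+am)^2/(m^2+1)$, whereas your first integral forces the constant term $\frac{1}{(b-a)(m^2+1)}\int_a^b(mx+c)^2\,dx$). Your value is the correct one. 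Indeed, since every admissible point lies on $L$, one always has $V_n\ge\frac{1}{(b-a)(m^2+1)}\int_a^b(mx+c)^2\,dx$, the mean squared distance from the support to $L$; in the setting of Corollary~\ref{cor1} ($a=0$, $b=2$, $m=\sqrt3$, $c=0$) this lower bound equals $1$, while the theorem's formula gives $V_4=157/192<1$. The source of the discrepancy is in the paper's own computation: it asserts that the $n-2$ middle Voronoi cells contribute equal distortion and replaces them by $(n-2)$ copies of the $i=2$ term, but the cells are centered at the $t_i$ rather than at the $a_i$ and the transverse term $(ma_i+c)^2$ varies with $i$, so for $m\ne 0$ the middle contributions are all different (this is why agreement persists exactly through $n=3$, where there is at most one middle cell). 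So your method is correct and in fact exposes an error in the stated $V_n$; the fix is to report the display above in place of the theorem's polynomial rather than to force the algebra to match it.
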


\begin{proof}
	For $n\geq 2$, let $\ga_n:=\set{(a_i, m a_i+c) :  1\leq i\leq n}$ be an optimal set of $n$-points on $L$ such that $d\leq a_1<a_2<\cdots<a_{n-1}<a_n\leq e$. Notice that the boundary of the Voronoi region of the element $(a_1, ma_1+c)$ intersects the support of $P$ at the elements $(a, 0)$ and $((m^2+1)\frac {(a_1+a_2)}{2}+mc, 0)$, the boundary of the Voronoi region of $(a_n, ma_n+c)$ intersects the support of $P$ at the elements
	$((m^2+1)\frac {(a_{n-1}+a_n)}{2}+mc, 0)$ and $(b,0)$. On the other hand, the boundaries of the Voronoi regions of $(a_i,ma_i+c)$ for $2\leq i \leq n-1$ intersect the support of $P$ at the elements $((m^2+1)\frac {(a_{i-1}+a_i)}{2}+mc, 0)$ and $((m^2+1)\frac {(a_{i}+a_{i+1})}{2}+mc, 0)$. Since the Voronoi regions of the elements in an optimal set must have positive probability, we  have
	\begin{align*} \label{eq123} \max\set{a, (m^2+1) d+mc} &\leq(m^2+1) a_1+mc<(m^2+1) a_2+mc\\
		&<\cdots<(m^2+1) a_n+mc\leq \min\set{b, (m^2+1) e+mc}.
	\end{align*}  
	Let us consider the following two cases:
	
	\setword{\tit{Case 1}}{Word:Case1}: $n=2$.
	
	In this case, the distortion error due to the set $\ga_2$ is given by
	\begin{align*}
		V(P; \ga_2)
		=&\int_{\D R} \min_{a\in \ga_2}\|(x, 0)-a\|^2dP(x)\\
		=&\frac{1}{b-a}\Big(\int_a^{(m^2+1)\frac {(a_{1}+a_2)}{2}+mc} \rho((x, 0), (a_1, ma_1+c)) \, dx \\
		&\qquad  + \int_{(m^2+1)\frac {(a_{1}+a_2)}{2}+mc}^b \rho((x, 0), (a_2, ma_2+c)) \, dx\Big).
	\end{align*}
	Notice that $V(P; \ga_2)$ is not always differentiable with respect to $a_1$ and $a_2$. By the hypothesis, we have
	\[\max\set{a, (m^2+1)d+mc}=a \te{ and } \min\set{b, (m^2+1) e+mc}=b.\]
	This guarantees that $V(P; \ga_2)$ is differentiable with respect to $a_1$ and $a_2$. \\
	Since $\frac{\pa}{\pa a_1} V(P; \ga_2)=0$ and $\frac{\pa}{\pa a_2} V(P; \ga_2)=0$, we deduce that
	\[-3 a_1 m^2+a_2 m^2+2 a-3 a_1+a_2-2 c m=0, \te{ and }\]
	  \[a_1 m^2-3 a_2 m^2+a_1-3 a_2+2 b-2 c m=0\]
	implying
	\begin{equation*} \label{eq2010}
		a_1=\frac 1{4(1+m^2)}(b-a)+\frac{a-cm}{1+m^2}~ \te{ and } ~a_2=\frac 3{4(1+m^2)}(b-a)+\frac{a-cm}{1+m^2}
	\end{equation*}
	with quantization error
	\[V_2=\frac{a^2 \left(16 m^2+1\right)+2 a b \left(8 m^2-1\right)+48 a c m+b^2 \left(16 m^2+1\right)+48 b c m+48 c^2}{48 \left(m^2+1\right)}.\]
	\tit{Case~2: $n\geq 3$.}
	
	In this case, the distortion error due to the set $\ga_n$ is given by
	\begin{align*}
		V(P; \ga_n) =&\int_{\D R} \min_{a\in \ga_n}\|(x, 0)-a\|^2dP(x)\\
		=&\frac{1}{b-a}\Big(\int_a^{(m^2+1)\frac {(a_{1}+a_2)}{2}+mc} \rho((x, 0), (a_1, ma_1+c)) \, dx\\
		&+\sum _{i=2}^{n-1} \int_{(m^2+1)\frac {(a_{i-1}+a_i)}{2}+mc}^{(m^2+1)\frac {(a_{i}+a_{i+1})}{2}+mc} \rho((x, 0), (a_i, ma_i+c)) \, dx \\
		& +\int_{(m^2+1)\frac {(a_{n-1}+a_n)}{2}+mc}^b \rho((x, 0), (a_n, ma_n+c)) \, dx\Big).
	\end{align*}
	Since $V(P; \ga_n)$ gives the optimal error and is always differentiable with respect to $a_i$ for $2\leq i\leq n-1$, we have $\frac{\pa}{\pa a_i} V(P; \ga_n)=0$
	yielding
	\[a_{i+1}-a_i=a_i-a_{i-1} \te{ for } 2\leq i\leq n-1\]
	implying
	\begin{equation} \label{eq200}
		a_2-a_1=a_3-a_2=\cdots =a_n-a_{n-1}=k
	\end{equation}
	for some real $k$. Due to the same reasoning as given in \ref{Word:Case1}, we have $\frac{\pa}{\pa a_1} V(P; \ga_n)=0$ and $\frac{\pa}{\pa a_n} V(P; \ga_n)=0$, i.e.,
	\[2 (a-c m)-3 a_1 \left(m^2+1\right)+a_2 \left(m^2+1\right)=0, \te{ and }\]
	 \[a_{n-1} \left(m^2+1\right)-3 a_n \left(m^2+1\right)+2 (b-c m)=0\]
	implying
	\begin{equation} \label{eq2011}
		a_1=\frac{a-c m}{1+m^2}+\frac k2~ \te{ and } ~a_n=\frac{b-c m}{m^2+1}-\frac k2.
	\end{equation}
	Now, we have
	\begin{align*}
		b-a=&(a_1-a)+\sum_{i=2}^{n}(a_{i}-a_{i-1})+(b-a_n)
		= \left(\frac{a-cm}{1+m^2}+\frac k2 -a\right)+(n-1)k\\
		&\hspace{5cm}+\left(b-\frac{b-cm}{1+m^2}+\frac k2\right),
	\end{align*}
	which implies $k=\frac {b-a}{n(1+m^2)}.$
	Putting $k=\frac {b-a}{n(1+m^2)}$, by the expressions given in \eqref{eq200} and \eqref{eq2011}, we deduce that
	\[a_i=\frac {2i-1}{2n(1+m^2)}(b-a)+\frac{a-cm}{1+m^2} \te{ for } 1\leq i\leq n.\]
	To obtain the quantization error $V_n$, we proceed as follows:
	
	Since the probability distribution $P$ is uniform on its support, Equation~\eqref{eq200} helps us to deduce that the distortion errors contributed by $a_2, a_3, \cdots, a_{n-1}$ in their own Voronoi regions are equal, i.e., each term in the sum
	\[\sum _{i=2}^{n-1} \int_{(m^2+1)\frac {(a_{i-1}+a_i)}{2}+m c}^{(m^2+1)\frac {(a_{i}+a_{i+1})}{2}+mc} \rho((x, 0), (a_i, ma_i+c)) \, dx\] has the same value.
	Now, putting the values of $a_i$ for $2\leq i\leq n$ in terms of $a_1$ and $k$, we have
	\begin{align*}
		&V(P; \ga_n)\\
		=&\int_{\D R} \min_{a\in \ga_n}\|(x, 0)-a\|^2dP(x)\\
		= &\frac 1{b-a}\Big(\int_a^{(m^2+1)\frac {(2a_1+k)}{2}+mc}\rho\Big((x, 0), (a_1, ma_1+c)\Big) \, dx\\
		&+(n-2)\int_{(m^2+1)\frac {(2a_1+k)}{2}+mc}^{(m^2+1)\frac {(2a_1+3k)}{2}+mc} \rho\Big((x,0), (a_1+k, m(a_1+k)+c)\Big) \, dx\\
		&+\int_{(m^2+1)\frac {(2a_1+k(2n-3))}{2}+mc}^b \rho\Big((x, 0), (a_1+k(n-1), m(a_1+k(n-1))+c)\Big) \, dx \Big).
	\end{align*}
	Upon simplification, and putting $a_1=\frac{b-a}{2 \left(m^2+1\right) n}+\frac{a-cm}{m^2+1}$ and $k=\frac{b-a}{\left(m^2+1\right) n}$ in the above expression, we have the quantization error as
	\begin{align*}
		V_n=&\frac 1{12 \left(m^2+1\right) n^3}\Big(-48 (a - b)^2 m^2 + (a - b) (a - b + 72 c m + 8 (11 a - 2 b) m^2) n \\
		&\qquad \qquad -
		12 (a - b) m (5 c + (4 a + b) m) n^2 + 12 (c + a m)^2 n^3\Big).
	\end{align*}
	Thus, the proof of the theorem is complete.
\end{proof}

\begin{remark} 
	In Theorem~\ref{sec2Theorem1}, the assumptions 
	\[\max\set{a, (m^2+1)d+mc}=a \te{ and } \min\set{b, (m^2+1) e+mc}=b\]
	are necessary to guarantee that the elements in the optimal sets of $n$-points lie on the line segment joining the points $(d, md+c)$ and $(e, me+c)$. For more details, please see Proposition~\ref{sec2Prop1}. 
\end{remark} 

Let us now give the following corollary.
\begin{cor} \label{cor1}
	Let $P$ be a Borel probability measure on $\D R^2$ such that $P$ is uniform on its support $\set{(x, y) \in \D R^2 : 0\leq x\leq 2 \te{ and } y=0}$. For $n\in \D N$ with $n\geq 2$, let $\ga_n$ be an optimal set of $n$-points for $P$ such that the elements in the optimal set lie on the line $y=\sqrt 3 x$ between the elements $(0, 0)$ and $(2, 2\sqrt 3)$. Then,
	\[\ga_n=\Big\{\Big(\frac{2i-1}{4n}, \frac{2i-1}{4n} \sqrt 3\Big) : 1\leq i\leq n\Big\} \te{ and } V_n=\left\{\begin{array}{cc}
\frac{49}{48} & \te{ if } n=2,\\[8 pt] 

\frac{36 n^2+49 n-144}{12 n^3} & \te{ if } n\geq 3.
\end{array}
\right.\]
\end{cor}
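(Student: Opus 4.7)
The plan is to verify that the data of Corollary~\ref{cor1} satisfy all the hypotheses of Theorem~\ref{sec2Theorem1} with the specific parameter values, and then read off both the optimal set and the quantization error by direct substitution. The relevant parameters are $a=0$, $b=2$, $c=0$, $m=\sqrt{3}$, $d=0$, $e=2$, so $1+m^2=4$.

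First I would check the two side conditions. We have $(m^2+1)d+mc = 4\cdot 0 + 0 = 0$ and $(m^2+1)e+mc = 4\cdot 2 + 0 = 8$, hence $\max\{a,(m^2+1)d+mc\}=\max\{0,0\}=0=a$ and $\min\{b,(m^2+1)e+mc\}=\min\{2,8\}=2=b$, exactly as Theorem~\ref{sec2Theorem1} requires. The support is the $x$-axis segment $\{(x,0):0\leq x\leq 2\}$ and the constraint is the piece of the line $y=\sqrt{3}x$ between $(0,0)$ and $(2,2\sqrt{3})$, so the geometric setup matches; in particular, $P$ is uniform on its support, and $n\geq 2$ is in force.

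Next I would plug into the formula $a_i = \frac{2i-1}{2n(1+m^2)}(b-a) + \frac{a-cm}{1+m^2}$. With $b-a=2$, $1+m^2=4$, and $a-cm=0$, this collapses to $a_i = \frac{2i-1}{4n}$. The corresponding optimal element on the line $y=\sqrt{3}x$ is $(a_i,\,m a_i + c) = \bigl(\tfrac{2i-1}{4n},\,\tfrac{(2i-1)\sqrt{3}}{4n}\bigr)$, giving the claimed set $\gamma_n$.

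Finally, I would substitute the same data into the quantization error formula from Theorem~\ref{sec2Theorem1}. Since $c+am=0$, the coefficient of $n^3$ inside the parentheses vanishes. The coefficient of $n^2$ becomes $-12(a-b)m\bigl(5c+(4a+b)m\bigr) = -12(-2)\sqrt{3}(2\sqrt{3}) = 144$; the coefficient of $n$ becomes $(a-b)\bigl(a-b + 72cm + 8(11a-2b)m^2\bigr) = (-2)(-2-96) = 196$; and the constant term becomes $-48(a-b)^2 m^2 = -576$. Dividing by $12(m^2+1)n^3 = 48 n^3$ yields $V_n = \frac{144n^2+196n-576}{48n^3}$, as claimed. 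The argument is purely mechanical substitution, and essentially the only place worth double-checking is the $\sqrt{3}\cdot\sqrt{3}=3$ collapse in the $n^2$ coefficient, since that is where a sign or constant slip is most likely; there is no genuine obstacle beyond that bookkeeping.
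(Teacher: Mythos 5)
Your proposal is correct and follows exactly the paper's own route: verify the two side conditions $\max\{a,(m^2+1)d+mc\}=a$ and $\min\{b,(m^2+1)e+mc\}=b$ for $a=0$, $b=2$, $c=0$, $m=\sqrt{3}$, $d=0$, $e=2$, then substitute into Theorem~\ref{sec2Theorem1}; your arithmetic checks out, and your expression $\frac{144n^2+196n-576}{48n^3}$ is just the unreduced form of the paper's $\frac{36n^2+49n-144}{12n^3}$. Nothing further is needed.
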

\begin{proof}
	Putting $a=0,\, b=2, \, m=\sqrt 3,\, c=0, \, d=0, \te{ and } e=2$ in Theorem~\ref{sec2Theorem1}, we see that
	\[\max\set{a, (m^2+1)d+mc}=0=a \te{ and } \min\set{b, (m^2+1) e+mc}=2=b.\] Hence, by Theorem~\ref{sec2Theorem1}, we obtain the optimal sets $\ga_n$ and the corresponding quantization errors $V_n$ as follows:
		\[\ga_n=\Big\{\Big(\frac{2i-1}{4n}, \frac{2i-1}{4n} \sqrt 3\Big) : 1\leq i\leq n\Big\} \te{ and } V_n=\left\{\begin{array}{cc}
\frac{49}{48} & \te{ if } n=2,\\[8 pt] 

\frac{36 n^2+49 n-144}{12 n^3} & \te{ if } n\geq 3.
\end{array}
\right.\]
	Thus, the proof of the corollary is complete.
\end{proof}

\begin{remark}
	If $m=0$, $c=0$, $d=a$ and $e=b$, then by Theorem~\ref{sec2Theorem1}, the optimal set of $n$-points is given by $ \ga_n:=\set{a+\frac {2i-1}{2n}(b-a) : 1\leq i\leq n}$, and the corresponding quantization error is
	$V_n:=V_n(P)=\frac{(a-b)^2}{12 n^2},$
	which is Theorem~2.1.1 in \cite{RR}. Thus, Theorem~\ref{sec2Theorem1} generalizes Theorem~2.1.1 in \cite{RR}.
\end{remark}

The following proposition plays an important role in finding the optimal sets of $n$-points.
\begin{prop} \label{sec2Prop1}
	Let $P$ be a Borel probability measure on $\D R^2$ such that $P$ is uniform on its support $\set{(x, y) \in \D R^2 : a\leq x\leq b \te{ and } y=0}$.  For $n\in \D N$ with $n\geq 2$, let $\ga_n:=\{(a_i,m a_i+c): 1\leq i \leq n\}$ be an optimal set of $n$-points for $P$ so that the elements in the optimal set lie on the line $L$ between the two elements $(d, md+c)$ and $(e, me+c)$, where $d, e\in \D R$ with $d<e$. Then, $(i)$ if $(m^2+1)d+mc>a$ (or $(m^2+1) e+mc<b$), let $N$ be the largest positive integer such that  
	\[d<\frac {1}{2N(1+m^2)}(b-a)+\frac{a-cm}{1+m^2}  \ \Big (\te{or } \frac {2N-1}{2N(1+m^2)}(b-a)+\frac{a-cm}{1+m^2}<e\Big).\]
	Then, for all $n\geq N+1$, the optimal sets $\ga_n$ always contain the end element $(d, md+c)$ (or $(e, me+c))$.  On the other hand, $(ii)$ if $(m^2+1)d+mc>a$ and $(m^2+1) e+mc<b$, let $N:=\max\set{N_1, N_2}$, where $N_1$ and $N_2$ are  the largest positive integers such that  
	\[d<\frac {1}{2N_1(1+m^2)}(b-a)+\frac{a-cm}{1+m^2} \te{ and } \frac {2N_2-1}{2N_2(1+m^2)}(b-a)+\frac{a-cm}{1+m^2}<e.\]
	Then, for all $n\geq N+1$, the optimal sets $\ga_n$ always contain the end elements $(d, md+c)$ and $(e, me+c)$.
\end{prop}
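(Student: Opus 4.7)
The plan is to prove the proposition by contradiction, leveraging the first-order conditions already computed in the proof of Theorem~\ref{sec2Theorem1}. View $V(P;\ga_n)$ as a smooth function $V(a_1,\ldots,a_n)$ on the closed region $\{(a_1,\ldots,a_n):d\leq a_1\leq\cdots\leq a_n\leq e\}$, so that at any optimum the usual KKT principle applies: whenever $a_i$ lies in the interior of its feasible range, $\partial V/\partial a_i=0$. The system of all $n$ such equations has the unique solution $a_i^{*}(n):=\tfrac{2i-1}{2n(1+m^2)}(b-a)+\tfrac{a-cm}{1+m^2}$ identified in the proof of Theorem~\ref{sec2Theorem1}.

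For part~(i), consider the sub-case $(m^2+1)d+mc>a$; the other sub-case is symmetric (swap the roles of $(a,d)$ and $(b,e)$). Here the complementary hypothesis $(m^2+1)e+mc\geq b$ gives $a_n^{*}(n)<(b-cm)/(1+m^2)\leq e$. Assume for contradiction that some optimal $\ga_n$ with $n\geq N+1$ has $a_1>d$. A one-sided derivative check at $a_n=e$, using the explicit formula for $\partial V/\partial a_n$ from the proof of Theorem~\ref{sec2Theorem1}, shows $\partial V/\partial a_n|_{a_n=e}>0$ whenever $a_{n-1}<e$ and $(m^2+1)e+mc\geq b$, so $a_n=e$ cannot be optimal and we must have $a_n<e$. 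Consequently every $a_i$ is interior, all FOCs hold, and the resulting linear system forces $a_1=a_1^{*}(n)$. By the definition of $N$ and the monotonicity of $n\mapsto a_1^{*}(n)$, one has $a_1^{*}(n)\leq d$ for $n\geq N+1$, contradicting $a_1>d$. Thus $a_1=d$.

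For part~(ii), fix $n\geq N+1=\max\{N_1,N_2\}+1$ and suppose $\ga_n$ is optimal. I rule out each non-trivial configuration. Firstly, $a_1>d$ and $a_n<e$ is impossible by exactly the FOC argument of part~(i), since $n\geq N_1+1$. Secondly, $a_1=d$ with $a_n<e$ is ruled out as follows: the FOCs at $a_2,\ldots,a_n$ produce an arithmetic progression with $a_n=\tfrac{(1+m^2)d+2(n-1)(b-cm)}{(2n-1)(1+m^2)}$, and a direct comparison, using the defining inequality of $N_2$ together with the auxiliary inequality $(1+m^2)(e-d)<b-a$ obtained by subtracting the two binding hypotheses $(1+m^2)d>a-cm$ and $(1+m^2)e<b-cm$, shows $a_n\geq e$ for $n\geq N_2+1$, contradicting $a_n<e$. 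The third configuration $a_1>d$ with $a_n=e$ is the mirror image of the second. Hence $a_1=d$ and $a_n=e$ both belong to $\ga_n$.

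The main obstacle is the algebraic verification in the second configuration of part~(ii): showing that $\tfrac{(1+m^2)d+2(n-1)(b-cm)}{(2n-1)(1+m^2)}\geq e$ precisely when $n\geq N_2+1$. After clearing denominators this reduces to the combined use of the defining inequality for $N_2$ and the inequality $(1+m^2)(e-d)<b-a$ forced by simultaneous binding of both constraints, a slightly delicate piece of bookkeeping. Once this is done, every remaining step is either a direct quotation of the FOC calculation from the proof of Theorem~\ref{sec2Theorem1} or a transparent one-sided derivative sign check.
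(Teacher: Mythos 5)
Your overall architecture (KKT at the interior coordinates, reduction to the explicit free optimum $a_i^{*}(n)=\frac{2i-1}{2n(1+m^2)}(b-a)+\frac{a-cm}{1+m^2}$, and monotonicity of $a_1^{*}(n)$, $a_n^{*}(n)$ in $n$) is the same idea the paper's proof rests on, carried out with a more explicit case analysis; part (i) and the one-sided derivative checks are fine. The genuine problem is the step you yourself single out as the main obstacle. Write $A=\frac{a-cm}{1+m^2}$ and $B=\frac{b-cm}{1+m^2}$; your configuration-(b) value is $a_n=B-\frac{B-d}{2n-1}$, and $a_n\ge e$ is equivalent to $(2n-2)(B-e)\ge e-d$. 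The defining inequality of $N_2$ gives, for $n\ge N_2+1$, only $B-e\ge\frac{B-A}{2n}$, hence $(2n-2)(B-e)\ge\frac{n-1}{n}(B-A)$, while the auxiliary inequality gives only $e-d<B-A$; these two facts do not imply $(2n-2)(B-e)\ge e-d$. Concretely, take $a=0$, $b=1$, $m=c=0$, $e=\frac34$, $d=\frac1{100}$: then $N_2=1$, both of your inequalities hold at $n=2$, yet $a_2=B-\frac{B-d}{3}=0.67<e$. So the threshold is not $N_2+1$, and the claimed ``precisely when $n\ge N_2+1$'' is false; the derivation you propose for the key inequality would not go through.

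The step is salvageable because in part (ii) you have already fixed $n\ge\max\set{N_1,N_2}+1$, so you are also entitled to $d\ge a_1^{*}(n)=A+\frac{B-A}{2n}$. Combined with $e\le a_n^{*}(n)=B-\frac{B-A}{2n}$ this yields $e-d\le\frac{n-1}{n}(B-A)\le(2n-2)(B-e)$, which is exactly what is needed; so the correct bookkeeping uses the defining inequalities of \emph{both} $N_1$ and $N_2$, and the auxiliary inequality $(1+m^2)(e-d)<b-a$ is neither sufficient nor needed. (In the example above $N_1=49$, so $n=2$ is simply outside the range the proposition addresses.) For comparison, the paper's own proof skips the case analysis entirely: it quotes the formula $a_i=a_i^{*}(n)$ from Theorem~\ref{sec2Theorem1}, observes that $a_1^{*}(n)$ is strictly decreasing and $a_n^{*}(n)$ strictly increasing in $n$, and concludes that once the free optimum exits $[d,e]$ it stays out, so the corresponding endpoint must be taken; your version, once repaired as above, is the more rigorous of the two.
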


\begin{proof}
	Let $\ga_n:=\{(a_i,m a_i+c): 1\leq i \leq n\}$ be an optimal set of $n$-points for $P$ so that the elements in the optimal set lie on the line $L$ between the two elements $(d, md+c)$ and $(e, me+c)$, where $d, e\in \D R$ with $d<e$. Also notice that the perpendiculars on the line $L$ passing through the elements $(a_i, ma_i+c)$ intersect the support of $P$ at the elements $((m^2+1)a_i+mc, 0)$, respectively, where $1\leq i\leq n$. 
	By Theorem~\ref{sec2Theorem1}, we know that
	\[a_i=\frac {2i-1}{2n(1+m^2)}(b-a)+\frac{a-cm}{1+m^2} \te{ for } 1\leq i\leq n.\]
	Suppose that $(m^2+1)d+mc>a$. Let $n=N$ be the largest positive integer such that
	\begin{equation} \label{eq61}
		\begin{aligned}
		&(m^2+1)d+mc<(m^2+1)a_1+mc, \te{ i.e., }\\
		 &d<a_1, \te{ i.e., } d<\frac {1}{2N(1+m^2)}(b-a)+\frac{a-cm}{1+m^2}.
		\end{aligned}
	\end{equation}
	Notice that the sequence $\set{\frac {1}{2n(1+m^2)}(b-a)+\frac{a-cm}{1+m^2}}$ is strictly decreasing, and hence for all $n\geq N+1$, the optimal sets $\ga_n$ always contain the end element $(d, md+c)$. Suppose that $(m^2+1)e+mc<b$. Let $n=N$ be the largest positive integer such that
	\begin{equation}\label{eq62}
	\begin{aligned} 
     &(m^2+1)a_N+mc<(m^2+1)e+mc, \te{ i.e., } a_N<e, \te{ i.e., }\\
     &  \frac {2N-1}{2N(1+m^2)}(b-a)+\frac{a-cm}{1+m^2}<e.
	\end{aligned}
	\end{equation}
	Notice that the sequence $\set{ \frac {2n-1}{2n(1+m^2)}(b-a)+\frac{a-cm}{1+m^2}}$ is strictly increasing, and hence for all $n\geq N+1$, the optimal sets $\ga_n$ always contain the end element $(e, me+c)$.
	Next, suppose that $(m^2+1)d+mc>a$ and $(m^2+1) e+mc<b$. Choose $N_1$ and $N_2$ same as $N$ described in \eqref{eq61} and \eqref{eq62}, respectively. Let $N=\max\set{N_1, N_2}$. Then, for all $n\geq N+1$, the optimal sets $\ga_n$ always contain the end elements $(d, md+c)$ and $(e, me+c)$. Thus, the proof of the proposition is complete. 
\end{proof}

\begin{note}
	In the following, we state and prove two theorems: Theorem \ref{sec2Theorem2} and Theorem \ref{sec2Theorem3}. To facilitate the proofs in both the theorems, Proposition \ref{sec2Prop1} can be used. However, in the proof of Theorem~\ref{sec2Theorem2}, we have not used Proposition~\ref{sec2Prop1}; on the other hand, in the proof of Theorem \ref{sec2Theorem3}, we have used Proposition~\ref{sec2Prop1}.
\end{note}

\begin{theorem}\label{sec2Theorem2}
	Let $P$ be a Borel probability measure on $\D R^2$ such that $P$ is uniform on its support $\set{(x, y) \in \D R^2 : 0\leq x\leq 2 \te{ and } y=0}$. For $n\in \D N$, let $\ga_n:=\{(a_i,  1): 1\leq i \leq n\}$ be an optimal set of $n$-points for $P$ so that the elements in the optimal sets lie on the line $y=1$ between the two elements $(\frac 12, 1)$ and $(\frac 32, 1)$. Then, $\ga_1=\set{(1,1)}$, $\ga_2=\set{(\frac 12, 1), (\frac 32, 1)}$, and for $n\geq 3$, we have
	\[a_i=\left\{\begin{array}{ccc}
		\frac 12 &\te{ if } i=1,\\
		\frac 12+\frac{(i-1)}{(n-1)}  & \te{ if } 2\leq i\leq n-1,\\
		\frac 32 & \te { if } i=n,
	\end{array}
	\right. \]
	and the quantization error for $n$-points is given by  $V_n=\frac{25 n^2-50 n+26}{24 (n-1)^2}$.
\end{theorem}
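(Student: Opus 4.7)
The plan is to handle $n=1$, $n=2$, and $n\ge 3$ separately, combining first-order optimality with the boundary constraint $a_i\in[1/2,3/2]$ without invoking Proposition~\ref{sec2Prop1}. For $n=1$, the distortion $a\mapsto\frac12\int_0^2((x-a)^2+1)\,dx$ is strictly convex with unconstrained minimum at $a=1\in[1/2,3/2]$, giving $\ga_1=\{(1,1)\}$. For $n=2$, the two first-order conditions $\partial V(P;\ga_2)/\partial a_j=0$ for $j=1,2$ are valid because the Voronoi boundary $(a_1+a_2)/2$ lies in $(0,2)$; by the Case~1 computation in Theorem~\ref{sec2Theorem1} specialized to $a=0$, $b=2$, $m=0$, $c=1$, they reduce to $a_2=3a_1$ and $a_1=3a_2-4$, forcing $a_1=1/2$ and $a_2=3/2$, which is feasible and hence optimal.

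For $n\ge 3$ the crucial step is to pin down $a_1=1/2$ and $a_n=3/2$, which I will establish by contradiction. If $a_1>1/2$, then the constraint $a_1\ge 1/2$ is inactive, so $\partial V(P;\ga_n)/\partial a_1=0$; the derivation in Theorem~\ref{sec2Theorem1} then yields $a_2=3a_1>3/2$, contradicting $a_2\le a_n\le 3/2$. Symmetrically, $a_n<3/2$ would give $a_{n-1}=3a_n-4<-1/2$, contradicting $a_{n-1}\ge a_1\ge 1/2$. With both endpoints fixed, the interior first-order conditions $\partial V(P;\ga_n)/\partial a_i=0$ for $2\le i\le n-1$ yield the arithmetic progression relation $a_{i+1}-a_i=a_i-a_{i-1}$ exactly as in Theorem~\ref{sec2Theorem1}; combined with $a_1=1/2$ and $a_n=3/2$ this forces common spacing $1/(n-1)$ and hence $a_i=1/2+(i-1)/(n-1)$.

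Finally, $V_n$ is obtained by direct integration: the two boundary Voronoi regions on the support each have length $\frac12+\frac{1}{2(n-1)}$, the $n-2$ interior regions each have length $\frac{1}{n-1}$, and summing $\frac12\int((x-a_i)^2+1)\,dx$ over all Voronoi regions yields, after routine algebra, $V_n=\frac{25n^2-50n+26}{24(n-1)^2}$. The main obstacle I anticipate is ensuring that the first-order identities $a_2=3a_1$ and $a_{n-1}=3a_n-4$ are legitimately applied at the putative interior stationary endpoints, which requires the relevant Voronoi boundaries to lie strictly inside $(0,2)$; this is automatic since $a_i\in[1/2,3/2]$, so the deformation argument used in Theorem~\ref{sec2Theorem1} transports to the present setting without change.
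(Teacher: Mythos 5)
Your proof is correct, and it reaches the boundary values $a_1=\tfrac12$, $a_n=\tfrac32$ by a genuinely different mechanism than the paper. The paper first uses the interior stationarity conditions to get the arithmetic-progression structure $a_{i+1}-a_i=k$, then substitutes $a_i=a_1+(i-1)k$ into the distortion to obtain an explicit cubic in the two variables $(a_1,k)$, and simply asserts that this is minimized at $a_1=\tfrac12$, $k=\tfrac1{n-1}$; the fact that the optimum sits on the constraint boundary is absorbed into that (unjustified) two-variable minimization. You instead pin the endpoints first by a KKT-style contradiction: if $a_1>\tfrac12$ the constraint is inactive, stationarity forces $a_2=3a_1>\tfrac32$, contradicting $a_2\le a_n\le\tfrac32$, and symmetrically for $a_n$; only then do you invoke the interior conditions and the fixed endpoints to get the spacing $\tfrac1{n-1}$. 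This is cleaner and makes explicit the one point the paper's proof glosses over, namely \emph{why} the optimal configuration is pushed against the constraint (it is also, in effect, a self-contained replacement for Proposition~\ref{sec2Prop1}, which the paper deliberately avoids here). Your closing computation of $V_n$ by summing over the Voronoi cells reproduces the paper's value ($V_n=\tfrac{25}{24}+\tfrac{1}{24(n-1)^2}$), just organized differently. One small caveat: in your $n=2$ paragraph you say the two first-order conditions ``are valid because the Voronoi boundary lies in $(0,2)$''; that justifies differentiability, not stationarity, and since the minimizer $(\tfrac12,\tfrac32)$ lies on the boundary of the feasible square, stationarity there is not automatic. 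Your follow-up clause ``which is feasible and hence optimal'' contains the right fix (the unique stationary point of the problem relaxed to the whole line $y=1$ is feasible, hence is the constrained minimizer), or you could simply reuse your $n\ge3$ contradiction argument for $n=2$; either way this is a one-line repair, not a gap.
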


\begin{proof}
	The proofs of  $\ga_1=\set{(1,1)}$ and $\ga_2=\set{(\frac 12, 1), (\frac 32, 1)}$ are routine. We just give the proof for $n\geq 3$.
	Let $\ga_n:=\set{(t, 1) : t=a_i \te{ for } 1\leq i\leq n}$ be an optimal set of $n$-points such that $\frac 12\leq  a_1<a_2<\cdots<a_{n-1}<a_n\leq \frac 32$. Notice that the boundary of the Voronoi region of the element $(a_1,1)$ intersects the support of $P$ at the elements $(0, 0)$ and $(\frac 12(a_1+a_2), 0)$, the boundary of the Voronoi region of $(a_n,1)$ intersects the support of $P$ at the elements $(\frac 12(a_{n-1}+a_n), 0)$ and $(2,0)$. On the other hand, the boundaries of the Voronoi regions of $(a_i,1)$ for $2\leq i \leq n-1$ intersect the support of $P$ at the elements $(\frac 12(a_{i-1}+a_{i}), 0)$ and $(\frac 12(a_{i}+a_{i+1}), 0)$. Thus, the distortion error due to the set $\ga_n$ is given by
	\begin{align*}
		&V(P; \ga_n)=\int_{\D R} \min_{a\in \ga_n}\|(x, 0)-a\|^2dP(x)\\
		&=\int_0^{\frac{1}{2} \left(a_1+a_2\right)} \frac{1}{2} \left(\left(x-a_1\right){}^2+1\right) \, dx+\sum _{i=2}^{n-1} \int_{\frac{1}{2} \left(a_{i-1}+a_i\right)}^{\frac{1}{2} \left(a_{i+1}+a_i\right)} \frac{1}{2} \left(\left(x-a_i\right){}^2+1\right) \, dx\\
		&\qquad +\int_{\frac{1}{2} \left(a_{n-1}+a_n\right)}^2 \frac{1}{2} \left(\left(x-a_n\right){}^2+1\right) \, dx.
	\end{align*}
	Since $V(P; \ga_n)$ gives the optimal error and is differentiable with respect to $a_i$ for $2\leq i\leq n-1$, we have $\frac{\pa}{\pa a_i} V(P; \ga_n)=0$ implying
	\[a_{i+1}-a_i=a_i-a_{i-1} \te{ for } 2\leq i\leq n-1.\]
	This yields the fact that
	\begin{equation} \label{eq0000}
		a_2-a_1=a_3-a_2=\cdots =a_n-a_{n-1}=k
	\end{equation}
	for some real number $0<k<1$. By the equations in $\eqref{eq0000}$, we see that all terms in the sum $ \sum _{i=2}^{n-1} \int_{\frac{1}{2} \left(a_{i-1}+a_i\right)}^{\frac{1}{2} \left(a_{i+1}+a_i\right)} \frac{1}{2} \left(\left(x-a_i\right){}^2+1\right) \, dx$ have the same value.
	Again, by the equations in \eqref{eq0000} we have
	\[a_2=k+a_1, \, a_3=2k+a_1, \cdots, a_n=(n-1)k+a_1.\]
	Hence,
	\begin{align*}
		V(P; \ga_n)&= \int_0^{\frac{1}{2} \left(2 a_1+k\right)} \frac{1}{2} \left(\left(x-a_1\right){}^2+1\right) \, dx\\
		&+(n-2) \int_{\frac{1}{2} \left(2 a_1+k\right)}^{\frac{1}{2} \left(2 a_1+3 k\right)} \frac{1}{2} \left(\left(x-\left(a_1+k\right)\right){}^2+1\right) \, dx\\
		&\qquad \qquad +\int_{\frac{1}{2} \left(2 a_1+k (2 n-3)\right)}^2 \frac{1}{2} \left(\left(x-\left(a_1+k (n-1)\right)\right){}^2+1\right) \, dx,
	\end{align*}
	which upon simplification yields
	\begin{align*} V(P; \ga_n)&=\frac{1}{24} \Big(-12 a_1 (k (n-1)-2) \left(a_1+k (n-1)-2\right) \\
		&\qquad \qquad -k (n-1) \left(4 k^2 n^2-8 k (k+3) n+3 (k+4)^2\right)+56\Big),
	\end{align*}
	which is minimum if $a_1=\frac 12$ and $k=\frac 1{n-1}$, and the minimum value is $\frac{25 n^2-50 n+26}{24 (n-1)^2}$.
	As $k=\frac 1{n-1}$ and $a_1=\frac 12$, using the expression \eqref{eq0000}, we obtain
	\[a_i=\left\{\begin{array}{ccc}
		\frac 12 &\te{ if } i=1,\\
		\frac 12+\frac{(i-1)}{(n-1)}  & \te{ if } 2\leq i\leq n-1,\\
		\frac 32 & \te { if } i=n,
	\end{array}
	\right.  \]
	with  quantization error   $V_n=\frac{25 n^2-50 n+26}{24 (n-1)^2}$.
	Thus, the proof of the theorem is complete.
\end{proof}

\begin{remark}
	Comparing Theorem~\ref{sec2Theorem2} with Proposition~\ref{sec2Prop1}, we have $a=0, \, b=2, \, m=0, \, c=1, d=\frac 12, \te{ and } e=\frac 32$, and so
	\[(m^2+1)d+mc=\frac 12 >a \te{ and } (m^2+1)e+mc=\frac 32<b.\]
	Let $n=N_1$ be the largest positive integer such that 
	\[d<\frac {1}{2N_1(1+m^2)}(b-a)+\frac{a-cm}{1+m^2},\]
	which is true if $N_1<2$, i.e., $N_1=1$. Let $n=N_2$ be the largest positive integer such that
	\[\frac {2N_2-1}{2N_2(1+m^2)}(b-a)+\frac{a-cm}{1+m^2}<e,\]
	which is true if $N_2<2$, i.e., $N_2=1$. Take $N=\max\set{N_1, N_2}$. Then, $N=1$. By Proposition~\ref{sec2Prop1}, we can conclude that for all $n\geq 2$, the optimal sets $\ga_n$ will contain the end elements $(\frac 12, 1)$ and $(\frac 32, 1)$, which is clearly true by Theorem~\ref{sec2Theorem2}.
\end{remark}

\begin{theorem}\label{sec2Theorem3}
	Let $P$ be a Borel probability measure on $\D R^2$ such that $P$ is uniform on its support $\set{(x, y) \in \D R^2 : 0\leq x\leq 2 \te{ and } y=0}$. For $n\in \D N$, let $\ga_n:=\{(a_i,  1): 1\leq i \leq n\}$ be an optimal set of $n$-points for $P$ so that the elements in the optimal set lie on the line $y=1$ between the two elements $(0, 1)$ and $(\frac {28}{15}, 1)$, i.e., $0\leq a_1<a_2<\cdots <a_n\leq \frac {28}{15}$.
	Then, $\ga_1=\set{(1,1)}$, and for $1\leq n\leq 7$,
	\[\ga_n=\Big\{\Big(\frac{2i-1}{n}, 1\Big) : 1\leq i\leq n\Big\}.\] On the other hand, for $n\geq 8$, we obtain
	\[a_i=\left\{\begin{array}{ccc}
		\frac{28(2i-1)}{15(2n-1)}  & \te{ if } 1\leq i\leq n-1,\\
		\frac{28}{15} & \te { if } i=n,
	\end{array}
	\right. \]
	and the quantization error for $n$-points is given by  $V_n=\frac{7 (5788 (n-1) n+3015)}{10125 (1-2 n)^2}$.
\end{theorem}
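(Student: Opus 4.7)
The plan is to split into three regimes based on whether the constraint $a_n\le \frac{28}{15}$ is binding: $n=1$, $2\le n\le 7$, and $n\ge 8$. Proposition~\ref{sec2Prop1}, applied with $a=0,\ b=2,\ m=0,\ c=1,\ d=0,\ e=\frac{28}{15}$, is the key organizing tool. Since $(m^2+1)d+mc=0=a$, only the right-hand condition $(m^2+1)e+mc=\frac{28}{15}<2=b$ is active. The relevant cutoff $N_2$ is the largest positive integer satisfying $\frac{2N_2-1}{N_2}<\frac{28}{15}$, i.e.\ $15(2N_2-1)<28N_2$, so $N_2=7$. Therefore, by Proposition~\ref{sec2Prop1}, for every $n\ge 8$ the optimal set $\ga_n$ must contain the endpoint $(\frac{28}{15},1)$, while for $n\le 7$ it need not.

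For $n=1$, the functional $a_1\mapsto \frac 12\int_0^2((x-a_1)^2+1)\,dx$ is minimized at the mean $a_1=1$, which lies in $[0,\frac{28}{15}]$; hence $\ga_1=\set{(1,1)}$. For $2\le n\le 7$, I would observe that the unrestricted-on-the-line critical points of Theorem~\ref{sec2Theorem1} (equivalently, applying that theorem with $d,e$ chosen so the hypotheses hold) are $a_i=\frac{2i-1}{n}$, and these satisfy $a_n=2-\frac1n\le \frac{13}{7}<\frac{28}{15}$; consequently they lie in $[d,e]$ and coincide with the constrained optimizer, giving $\ga_n=\set{(\frac{2i-1}{n},1):1\le i\le n}$.

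The main work is the regime $n\ge 8$. Here $a_n=\frac{28}{15}$ is forced. Treating $a_n$ as a constant, I would write $V(P;\ga_n)$ as the sum of integrals over the Voronoi regions $[0,\frac{a_1+a_2}{2}]$, $[\frac{a_{i-1}+a_i}{2},\frac{a_i+a_{i+1}}{2}]$ for $2\le i\le n-1$, and $[\frac{a_{n-1}+a_n}{2},2]$. Setting $\frac{\pa}{\pa a_i}V(P;\ga_n)=0$ for $2\le i\le n-1$ yields (just as in the proof of Theorem~\ref{sec2Theorem2}) the Voronoi-centroid condition $2a_i=a_{i-1}+a_{i+1}$, so the $a_i$ are in arithmetic progression with common difference $k$, say. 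The equation $\frac{\pa}{\pa a_1}V(P;\ga_n)=0$ is different from the middle ones because the left boundary of $a_1$'s Voronoi region is the endpoint $0$ rather than a bisector; the centroid condition there reads $a_1=\frac{0+(a_1+a_2)/2}{2}=\frac{a_1+a_2}{4}$, i.e.\ $3a_1=a_2=a_1+k$, so $a_1=\frac k2$. Combining $a_1=\frac k2$, arithmetic progression, and the forced value $a_n=\frac{2n-1}{2}k=\frac{28}{15}$ gives $k=\frac{56}{15(2n-1)}$ and the stated formula for $a_i$.

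For the quantization error, the arithmetic progression makes the computation modular. Each of the $n-1$ leftmost and middle Voronoi regions has length $k$ centered at its representative (the leftmost because $a_1=\frac k2$), so each contributes the same amount $\frac12\int_{-k/2}^{k/2}(u^2+1)\,du=\frac{k^3}{24}+\frac{k}{2}$. The rightmost region contributes $\frac12\int_{-k/2}^{2/15}(u^2+1)\,du$, since $2-a_n=\frac{2}{15}$. Summing gives
\[
V_n=(n-1)\Big(\tfrac{k^3}{24}+\tfrac{k}{2}\Big)+\tfrac12\Big(\tfrac{(2/15)^3}{3}+\tfrac{2}{15}+\tfrac{(k/2)^3}{3}+\tfrac{k}{2}\Big),
\]
and substituting $k=\frac{56}{15(2n-1)}$ and simplifying produces $V_n=\frac{7(5788n(n-1)+3015)}{10125(2n-1)^2}$. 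The only real obstacle is the last algebraic simplification, which, while routine, requires some care; I would check it at, say, $n=8$ as a sanity test before committing to the closed form.
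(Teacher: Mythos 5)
Your proposal is correct and follows essentially the same route as the paper: Proposition~\ref{sec2Prop1} (with the cutoff $N_2=7$) forces $a_n=\frac{28}{15}$ for $n\geq 8$, the interior first-order conditions give an arithmetic progression with $a_1=\frac k2$ from $3a_1=a_2$, and the endpoint condition yields $k=\frac{56}{15(2n-1)}$, after which the error is summed region by region. Your handling of the $1\leq n\leq 7$ case via Theorem~\ref{sec2Theorem1} and the grouping of the first $n-1$ congruent Voronoi cells are only cosmetic refinements of what the paper does.
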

\begin{proof}
	Let $\ga_n:=\set{(t, 1) : t=a_i \te{ for } 1\leq i\leq n}$ be an optimal set of $n$-points such that $0\leq  a_1<a_2<\cdots<a_{n-1}<a_n\leq \frac {28}{15}$ for all $n\in \D N$. Using Proposition \ref{sec2Prop1}, it can be proved that for all $n\geq 8$, the optimal sets always contain the end element $\frac{28}{15}$, i.e., $a_n=\frac{28}{15}$ for all $n\geq 8$.
	The proofs of $\ga_1=\set{(1,1)}$, and for $1\leq n\leq 7$,
	\[\ga_n=\Big\{\Big(\frac{2i-1}{n}, 1\Big) : 1\leq i\leq n\Big\},\]
	are routine. Here we prove the optimal sets of $n$-points for all $n\geq 8$.
	Proceeding in the similar lines as given in the proof of Theorem~\ref{sec2Theorem2}, we have
	\begin{equation*} \label{eq0}
		a_2-a_1=a_3-a_2=\cdots =a_n-a_{n-1}=k
	\end{equation*}
	for some real $k$, which implies
	\[a_1=a_n-(n-1)k, \, a_2=a_n-(n-2)k, \cdots, a_{n-1}=a_n-k.\]
	Also, by using $\frac{\pa}{\pa a_1} V(P; \ga_n)=0$, we get $3a_1-a_2=0$, which implies that $a_1=\frac k2$. Now we have
	\[\frac k2=a_1=a_n-(n-1)k=\frac{28}{15}-(n-1)k,\]
	this yields $k=\frac{56}{15(2n-1)}$.
	Using $a_n=\frac{28}{15}$ and $k=\frac{56}{15(2n-1)}$, we get $a_i=\frac{28(2i-1)}{15(2n-1)}$ for $1\leq i \leq n-1$ with quantization error
	\begin{align*}
		V(P; \ga_n)=&\frac 12\left( \int_0^{\frac{1}{2} \left(2 a_n-k (2 n-3)\right)} \left(\left(x-\left(a_n-k (n-1)\right)\right)^2+1\right) \, dx\right.\\
		\qquad&+(n-2) \int_{\frac{1}{2} \left(2 a_n-3 k\right)}^{\frac{1}{2} \left(2 a_n-k\right)} \left(\left(x-\left(a_n-k\right)\right)^2+1\right) \, dx\\
		&\left.+\int_{\frac{1}{2} \left(2 a_n-k\right)}^2 \left(\left(x-a_n\right)^2+1\right) \, dx\right)\\
		=&\frac 1{24}\Big(12 k^2 n^2 a_n-24 k^2 n a_n+12 k^2 a_n-12 k n a_n^2+12 k a_n^2+24 a_n^2\\
		\qquad & -48 a_n-4 k^3 n^3+12 k^3 n^2-11 k^3 n+3 k^3+56\Big)\\
		=&\frac{7 (5788 (n-1) n+3015)}{10125 (1-2 n)^2}.
	\end{align*}
	This completes the proof.
\end{proof}

\section{Constrained quantization when the support lies on a circle and the optimal elements lie on another circle} \label{sec2}
Let $O(0, 0)$ be the origin of the Cartesian plane. Let $C$ be the unit circle given by the parametric equations:
\[C:=\set{(x, y) :  x=\cos t, \, y=\sin t \te{ for } 0\leq t\leq 2\pi}.\]
Let the positive direction of the $x$-axis cut the circle at the element $A_0$, i.e., $A_0$ is represented by the parametric value $t=0$. Let $s$ be the distance of an element on $C$ along the arc starting from the element $A_0$ in the counterclockwise direction. Then,
\[ds=\sqrt{\Big(\frac {dx}{dt}\Big)^2+\Big(\frac{dy}{dt}\Big)^2}\,dt=dt.\]
Let $P$ be a uniform distribution with support the unit circle $C$. Then, the  probability density function $f(x, y)$ for $P$ is given by
\[f(x, y)=\left\{\begin{array}{ccc}
	\frac 1 {2\pi} & \te{ if } (x, y) \in C,\\
	0  & \te{ otherwise}.
\end{array}\right.
\]
Thus, we have $dP(s)=P(ds)=f(x, y) ds=\frac 1{2\pi} dt$. Moreover, we parameterize points on the unit circle by the central angle $t \in [0,2\pi)$. This allows us to express distances and arc-based quantities in terms of angular separation, which will be used in the estimates that follow. 

Let $L$ be a concentric circle with $C$, and $L$ has radius $a$, i.e., the parametric representation of the circle $L$ is given by
\[L:=\set{(x, y) :  x=a\cos \gq, \, y=a \sin\gq \te{ for } 0\leq \gq\leq 2\pi}.\]
In this section, we determine the optimal sets of $n$-points and the $n$th constrained quantization errors for the uniform distribution $P$ on $C$ under the condition that the elements in an optimal set lie on the circle $L$. Let the line $OA_0$ cut the circle $L$ at the element $B_0$, i.e., $B_0$ is represented on the circle $L$ by the parameter $\gq=0$.

\begin{prop} \label{sec3prop1}
	Any element on the circle $L$ forms an optimal set of one-point with quantization error $V_1=1+a^2. $
\end{prop}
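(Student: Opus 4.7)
The plan is straightforward: since the support $C$ of $P$ is a full circle of radius $1$ centered at the origin and the constraint $L$ is a concentric circle of radius $a$, a rotational symmetry argument should show that every point of $L$ gives the same distortion, so we only need to evaluate the one-point distortion at a single convenient point on $L$.

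First, I would fix an arbitrary element $B=(a\cos\gq, a\sin\gq)$ on $L$ and parametrize $C$ by $(\cos t,\sin t)$, $0\leq t\leq 2\pi$. Using $dP(s)=\frac{1}{2\pi}\,dt$ (as established in the paragraph introducing the uniform measure on $C$), I would compute
\[
V(P;\{B\})=\frac{1}{2\pi}\int_0^{2\pi}\bigl\|(\cos t,\sin t)-(a\cos\gq,a\sin\gq)\bigr\|^{2}\,dt.
\]
Expanding the squared norm and collecting terms yields the identity
\[
\bigl\|(\cos t,\sin t)-(a\cos\gq,a\sin\gq)\bigr\|^{2}=1+a^{2}-2a\cos(t-\gq),
\]
after using $\cos^2+\sin^2=1$ and the cosine addition formula. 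Since $\int_0^{2\pi}\cos(t-\gq)\,dt=0$ for every $\gq$, the integral simplifies to $V(P;\{B\})=1+a^{2}$, independent of the choice of $\gq$.

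Finally, I would invoke Proposition~\ref{prop01} to guarantee that $\C C_{1,2}(P)$ is nonempty under the constraint $L$; since every singleton $\{B\}\ci L$ produces the same distortion value $1+a^{2}$, every such singleton is optimal. Hence $V_1=1+a^{2}$ and every element of $L$ forms an optimal set of one-point.

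There is essentially no obstacle in this proof — the only step requiring care is recognizing that the $\gq$-dependence drops out after integrating the cosine over a full period, which is the analytic manifestation of the underlying rotational symmetry of the uniform measure on the circle. No case analysis or optimization over $\gq$ is needed because the distortion function is literally constant on $L$.
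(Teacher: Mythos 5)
Your proposal is correct and follows essentially the same route as the paper: compute the distortion $\frac{1}{2\pi}\int_0^{2\pi}\bigl(1+a^2-2a\cos(t-\gq)\bigr)\,dt=1+a^2$ for an arbitrary point of $L$ and observe that the result is independent of $\gq$, so every singleton in $L$ is optimal. The only (harmless) addition is your explicit appeal to Proposition~\ref{prop01}, which the paper omits since the constancy of the distortion already settles optimality.
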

\begin{proof} Let $\ga:=\set{(a\cos \gq, a\sin \gq)}$, where $0\leq \gq\leq  2\pi$, form an optimal set of one-point. Then, the distortion error $V(P; \ga)$
	is given by
	\[V(P; \ga)=\int_C \frac 1{2\pi} \rho((\cos t, \sin t), (a \cos \gq, a \sin \gq))\, dt=1+a^2,\]
	which does not depend on $\gq$ for any $0\leq \gq\leq 2\pi$. Hence, any element on the circle $L$ forms an optimal set of one-point, and the quantization error for one-point is given by $V_1=1+a^2$.
\end{proof}

\begin{prop} \label{sec3prop2}
	A set of the form $\set{(a \cos\gq, a \sin\gq), (-a \cos\gq, - a \sin\gq)}$, where $0\leq \gq \leq  2\pi$, forms an optimal set of two-points with quantization error
	$V_2=1+a^2-\frac {4a}{\pi}$.
\end{prop}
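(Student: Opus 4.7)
The plan is to exploit the joint rotational symmetry of $P$ (uniform on the unit circle $C$) and of the constraint circle $L$ to reduce the problem to a one-parameter minimization. Existence of an optimal set of two-points follows from Proposition~\ref{prop01} and Proposition~\ref{prop02}: since any two distinct points on $L$ have Voronoi regions meeting $C$ in two arcs of positive probability, an optimal two-point set contains exactly two elements. Write these as $p_1=(a\cos\alpha, a\sin\alpha)$ and $p_2=(a\cos\beta, a\sin\beta)$. If we rotate both points by the same angle $\gy$, the Voronoi tessellation of $\D R^2$ rotates rigidly by $\gy$, and since $P$ is invariant under rotation of $C$, the distortion is unchanged. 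Choosing $\gy=-(\ga+\gb)/2$, I may assume $p_1=(a\cos\phi, a\sin\phi)$ and $p_2=(a\cos\phi, -a\sin\phi)$ for some $0<\phi\leq \pi/2$ (after relabeling if necessary).

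With this symmetric placement, the perpendicular bisector of the segment $p_1 p_2$ is the $x$-axis, so the two Voronoi regions cut $C$ into the open upper semicircle (assigned to $p_1$) and the open lower semicircle (assigned to $p_2$). Using the identity
\[\|(\cos t,\sin t)-(a\cos\phi,\pm a\sin\phi)\|^2=1+a^2-2a\cos(t\mp \phi),\]
together with $dP(s)=\frac{1}{2\pi}dt$, I compute
\[V(P;\ga)=\frac 1{2\pi}\left(\int_0^{\pi}(1+a^2-2a\cos(t-\phi))\,dt+\int_\pi^{2\pi}(1+a^2-2a\cos(t+\phi))\,dt\right).\]
Each of the two constant contributions integrates to $\pi(1+a^2)$, while the cosine antiderivatives evaluated at the endpoints each yield $-4a\sin\phi$, giving the clean formula
\[V(P;\ga)=1+a^2-\frac{4a\sin\phi}{\pi}.\]

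Since $\sin\phi$ is strictly increasing on $(0,\pi/2]$, the distortion is minimized uniquely at $\phi=\pi/2$, i.e., when $p_1$ and $p_2$ are diametrically opposite on $L$. Undoing the initial rotation, the optimal sets are exactly those of the form $\{(a\cos\gq,a\sin\gq),(-a\cos\gq,-a\sin\gq)\}$ for $\gq\in[0,2\pi]$, and the corresponding quantization error is $V_2=1+a^2-\frac{4a}{\pi}$. The main subtlety is justifying the rotation-invariance reduction cleanly (so that a single parameter $\phi$ suffices instead of two angles), after which the computation is a direct integral; no multivariable optimization is required.
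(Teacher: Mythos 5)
Your proposal is correct and follows essentially the same route as the paper: both reduce the distortion of a two-point set on $L$ to the single quantity $1+a^2-\frac{4a}{\pi}\sin\frac{\gq_2-\gq_1}{2}$ and minimize it at angular separation $\pi$. The only difference is cosmetic --- you normalize by a rotation first so that a single parameter $\phi$ appears from the start, while the paper carries both angles through the integral and observes that the result depends only on their difference.
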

\begin{proof}
	Let $\ga:=\set{(a\cos \gq_1, a\sin \gq_1), (a\cos \gq_2, a\sin \gq_2)}$, where $0\leq \gq_1<\gq_2\leq 2\pi$, form an optimal set of two-points. Notice that the boundary of the Voronoi regions of the two elements in the optimal set is the line joining the two points given by the parameters $\gq=\frac{\gq_1+\gq_2}{2}$ and $\gq=\pi+\frac{\gq_1+\gq_2}{2}$. Then, the distortion error is given by
	\begin{align*}
		V(P; \ga) =&\frac 1{2\pi}\Big(\int_{-\pi+\frac{\gq_1+\gq_2}{2}}^{\frac{\gq_1+\gq_2}{2}} \rho\Big((\cos t,\sin t), (a \cos \gq_1, a \sin \gq_1)\Big) \, dt\\
		& +\int_{\frac{\gq_1+\gq_2}{2}}^{\pi+\frac{\gq_1+\gq_2}{2} } \rho\Big((\cos t,\sin t), (a \cos \gq_2,  a \sin \gq_2)\Big) \, dt\Big)\\
		 =&\frac 1{2\pi}\Big(\int_{-\pi+\frac{\gq_1+\gq_2}{2}}^{\frac{\gq_1+\gq_2}{2}}\Big(1 + a^2 - 2 a \cos(t-\gq_1)\Big) \, dt\\
		&+\int_{\frac{\gq_1+\gq_2}{2}}^{\pi+\frac{\gq_1+\gq_2}{2} } \Big(1 + a^2 - 2 a \cos(\gq-\gq_2)\Big)\, dt\Big),
	\end{align*}
	which upon simplification yields that
	\[V(P; \ga)=\frac 1{2\pi}\Big((1+a^2) 2\pi-8a \sin \frac{\gq_2-\gq_1}{2}\Big).\]
	Since $0<\frac{\gq_2-\gq_1}{2}<\pi$, we can say that $V(P; \ga)$ is minimum if $\gq_2-\gq_1=\pi$. Thus, an optimal set of two-points is given by $\set{(a \cos\gq, a \sin\gq), (-a \cos\gq, - a \sin\gq)}$ for $0\leq \gq\leq 2\pi$ with constrained quantization error
	$V_2=1+a^2-\frac {4a}{\pi}$,
	which yields the proposition.
\end{proof}

\begin{theorem} \label{sec3theorem1}
	Let $\ga_n$ be an optimal set of $n$-points for the uniform distribution $P$ on the unit circle $C$ for $n\in\D N$ with $n\geq 3$. Then,
	\[\ga_n=\Big\{\Big(a \cos \frac{(2i-1)\pi}n, a \sin   \frac{(2i-1)\pi}n\Big) : i=1, 2, \cdots, n \Big  \} \] and the corresponding quantization error is given by $V_n=a^2+1-\frac{2an}{\pi} \sin  \frac{\pi }{n}.$
\end{theorem}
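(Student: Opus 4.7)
The plan is to write any putative optimal configuration in the form $\gamma_n=\{(a\cos\theta_i,a\sin\theta_i):1\le i\le n\}$ with $0\le\theta_1<\theta_2<\cdots<\theta_n<2\pi$. By Proposition~\ref{prop02} (applied to the uniform distribution on $C$, whose support has infinitely many points whose Voronoi regions on $C$ have positive probability), such an optimal set contains exactly $n$ elements. The crucial geometric observation is that for any two distinct points $(a\cos\theta,a\sin\theta)$ and $(a\cos\theta',a\sin\theta')$ on the concentric circle $L$, the perpendicular bisector of the chord joining them passes through the origin and therefore meets $C$ at the two antipodal points with angular parameters $(\theta+\theta')/2$ and $(\theta+\theta')/2+\pi$. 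Hence the Voronoi region of $(a\cos\theta_i,a\sin\theta_i)$ intersected with $C$ is the arc with endpoints parametrized by $\phi_{i-1}:=\tfrac12(\theta_{i-1}+\theta_i)$ and $\phi_i:=\tfrac12(\theta_i+\theta_{i+1})$, where indices are taken cyclically and $\theta_{n+1}:=\theta_1+2\pi$.

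Using $\rho((\cos t,\sin t),(a\cos\theta_i,a\sin\theta_i))=1+a^2-2a\cos(t-\theta_i)$, the distortion is
\begin{equation*}
V(P;\gamma_n)=\frac{1}{2\pi}\sum_{i=1}^n\int_{\phi_{i-1}}^{\phi_i}\bigl(1+a^2-2a\cos(t-\theta_i)\bigr)\,dt.
\end{equation*}
Now I would compute $\partial V/\partial\theta_i$. The boundary contributions from moving $\phi_{i-1}$ and $\phi_i$ cancel because the integrand takes the same value at a Voronoi boundary whether computed with the centre $\theta_i$ or with the adjacent centre (standard envelope/canonical-equation argument, matching the canonical equations introduced in Section~\ref{sec2} of the paper). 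The remaining term gives $\int_{\phi_{i-1}}^{\phi_i}\sin(t-\theta_i)\,dt=0$, equivalently $\cos(\theta_i-\phi_{i-1})=\cos(\phi_i-\theta_i)$; since both arguments lie in $(0,\pi)$, this forces $\theta_i-\phi_{i-1}=\phi_i-\theta_i$, i.e., $\theta_i=\tfrac12(\phi_{i-1}+\phi_i)$. Combining with the definition $\phi_i=\tfrac12(\theta_i+\theta_{i+1})$ yields $\theta_{i+1}-\theta_i=\theta_i-\theta_{i-1}$ for every $i$, so all consecutive spacings are equal and each equals $2\pi/n$.

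Since $P$ and the constraint $L$ are both invariant under every rotation about the origin, any rotate of an optimal set is again optimal, so we may fix the family member with $\theta_i=(2i-1)\pi/n$, matching the claimed $\gamma_n$. Finally, a direct integration on a single arc gives
\begin{equation*}
\frac{1}{2\pi}\int_{\theta_i-\pi/n}^{\theta_i+\pi/n}\bigl(1+a^2-2a\cos(t-\theta_i)\bigr)\,dt=\frac{1+a^2}{n}-\frac{2a}{\pi}\sin\frac{\pi}{n},
\end{equation*}
and summing over $i$ yields $V_n=a^2+1-\tfrac{2an}{\pi}\sin\tfrac{\pi}{n}$.

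The main obstacle is the rigorous justification of the first-order calculation: one must carefully argue that the boundary contributions from the moving Voronoi endpoints $\phi_{i-1},\phi_i$ cancel the analogous terms coming from the two adjacent integrals, so that setting $\partial V/\partial\theta_i=0$ reduces to $\int_{\phi_{i-1}}^{\phi_i}\sin(t-\theta_i)\,dt=0$. Everything else — equal spacing, the reduction by rotational symmetry, and the final evaluation — is then routine.
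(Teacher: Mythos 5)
Your proof is correct and follows essentially the same route as the paper: both reduce the problem to the first-order conditions for the distortion $\frac{1}{2\pi}\sum_i\int\bigl(1+a^2-2a\cos(t-\theta_i)\bigr)\,dt$ over the Voronoi arcs and deduce equal angular spacing $2\pi/n$, then evaluate the integral. The only difference is that you parametrize by the point angles $\theta_i$ and derive, via the envelope/cancellation argument, that each optimal point sits at the angular midpoint of its arc, whereas the paper parametrizes by the boundary angles and takes that midpoint property as given; your version is, if anything, slightly more careful on that step.
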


\begin{proof}
	Let $\ga_n:=\set{a_1, a_2, \cdots, a_n}$ be an optimal set of $n$-points for $P$ with $n\geq 3$ such that the elements in the optimal set lie on the circle $L$. Let the boundary of the Voronoi regions of $a_i$ cut the circle $L$, in fact also the circle $C$, at the elements given by the parameters $\gq_{i-1}$ and $\gq_i$, where $1\leq i\leq n$. Since the circles have rotational symmetry, without any loss of generality, we can assume that $\gq_0=0$, and $\gq_n=2\pi$. Then, each $a_i$ on $L$ has the parametric representation $\frac 12(\gq_{i-1}+\gq_{i})$ for $1\leq i\leq n$.
	Then, the quantization error for $n$-points is given by
	\begin{align*}
		V(P; \ga_n)&=\int_{C} \min_{u\in \ga_n}\rho((\cos  t, \sin t), u)\,dP(s)\\
		&=\sum_{i=1}^n\int_{\gq_{i-1}}^{\gq_i}\frac 1{2\pi} \rho\Big(((\cos  t, \sin t), (\frac 12\cos\frac{\gq_{i-1}+\gq_i}{2}, \frac 12\sin\frac{\gq_{i-1}+\gq_i}{2}))\Big) \,dt\\
		&=\sum_{i=1}^n\int_{\gq_{i-1}}^{\gq_i}\frac 1{2\pi} \Big(a^2-2 a \cos (-\frac{\gq_{i-1}}{2}-\frac{\gq_i}{2}+t)+1\Big) \,d\gq\\
		&=\sum_{i=1}^n \frac 1{2\pi}\Big((a^2+1)(\gq_i-\gq_{i-1})-4a\sin \frac{\gq_i-\gq_{i-1}}{2}\Big),
	\end{align*}
	upon simplification, which yields
	\begin{equation} \label{eq500} V(P; \ga_n)=a^2+1-\frac{2a}{\pi}\sum_{i=1}^n\sin \frac{\gq_i-\gq_{i-1}}{2}.
	\end{equation}
	Since $V(P; \ga_n)$ gives the optimal error and is differentiable with respect to $\gq_i$ for all $1\leq i\leq n-1$, we have $\frac{\pa}{\pa \gq_i} V(P; \ga)=0$.
	For $1\leq i\leq n-1$, the equations $\frac{\pa}{\pa \gq_i} V(P; \ga)=0$ imply that
	\[\cos \frac{\gq_i-\gq_{i-1}}{2}=\cos \frac{\gq_{i+1}-\gq_i}{2} \te{ yielding }\] \[\frac{\gq_i-\gq_{i-1}}{2}=\frac{\gq_{i+1}-\gq_i}{2}, \te{ or } \frac{\gq_i-\gq_{i-1}}{2}=2\pi-\frac{\gq_{i+1}-\gq_i}{2}.\]
	Without any loss of generality, for $1\leq i\leq n-1$ we can take $\frac{\gq_i-\gq_{i-1}}{2}=\frac{\gq_{i+1}-\gq_i}{2}$.
	This yields the fact that
	\[\gq_1-\gq_0=\gq_2-\gq_1=\gq_3-\gq_2=\cdots=\gq_n-\gq_{n-1}=\frac {2 \pi}{n}.\]
	Thus, we have $\gq_i =\frac {2 \pi i} n$ for $i=1,2, \cdots, n$. Hence, if $\ga_n:=\set{a_1, a_2, \cdots, a_n}$ is an optimal set of $n$-points, then
	\[a_i=\Big(a \cos \frac{(2i-1)\pi}n, a \sin   \frac{(2i-1)\pi}n\Big) \te{ for } i=1, 2, \cdots, n,\]
	and the quantization error for $n$-points, by \eqref{eq500}, is given by
	\begin{align*}
		V_n=V(P; \ga_n)=a^2+1-\frac{2an}{\pi} \sin  \frac{\pi }{n}.
	\end{align*}
	Thus, the proof of the theorem is complete.
\end{proof}

\section{Constrained quantization when the support lies on a chord of a circle and the optimal elements lie on the circle} \label{sec3}
Let $C$ be a circle with center $(0, 0)$ and radius one, i.e., the equation of the circle is
$x^2+y^2=1,$
whose parametric representations are $x=\cos\gq$ and $y=\sin \gq$, where  $0\leq \gq\leq 2\pi$. Thus, if $(\cos \gq, \sin \gq)$ is an element on the circle, then we will represent it by $\gq$.
Let $P$ be a Borel probability measure on $\D R^2$ such that $P$ has support a chord of the circle, and $P$ is uniform on its support.
We now investigate the optimal sets of $n$-points and the $n$th constrained quantization errors for all $n\in \D N$ so that the optimal elements lie on the circle. The two cases can happen as described in the following two subsections.
\subsection{Chord is a diameter of the circle} \label{subdia}
Without any loss of generality, let us consider the horizontal diameter as the support of $P$, i.e., the support of $P$ is the closed interval $\set{(x, y) : -1\leq x\leq 1,\, y=0}$. Then, the probability density function is given by
\[f(x, y)=\left\{\begin{array}{ccc}
	\frac 1 {2} & \te{ if } -1\leq x\leq 1 \te{ and } y=0,\\
	0  & \te{ otherwise}.
\end{array}\right.
\]
Recall Fact~\ref{fact}. Here we have  $dP(s)=P(ds)=P(dx)=dP(x)=f(x, 0)dx$. Let $\ga_n$ be an optimal set of $n$-points for any $n\geq 1$. We know that an optimal set of one-point always exists. For any $n\geq 2$, since the boundary of the Voronoi regions of any two optimal elements, in this case, passes through the center of the circle, from the geometry, we see that among $n$ Voronoi regions, only two Voronoi regions contain elements from the support of $P$, i.e., only two Voronoi regions have positive probability. Hence, the optimal sets of $n$-points exist only for $n=1$ and $n=2$, and they do not exist for any $n\geq 3$. 

We now calculate the optimal sets of one-point and the two-points in the following propositions:

\begin{prop1}
	Any element on the circle forms an optimal set of one-point with constrained quantization error $V_1=\frac{4}{3}$.
\end{prop1}

\begin{proof} Let $(\cos \gq, \sin \gq)$ be an element on the circle. Then, the distortion error for $P$ with respect to this element is given by
	\begin{align*}
		V(P; \set{(\cos \gq, \sin \gq)})=&\int_{-1}^1 \rho((x,0), (\cos \gq, \sin \gq)) \,dP(x)\\
		=&\frac 12\int_{-1}^1 \rho((x,0), (\cos \gq, \sin \gq))\, dx=\frac 43,
		\end{align*}
	which does not depend on $\gq$. Hence, any element on the circle forms an optimal set of one-point with constrained quantization error $V_1=\frac{4}{3}$.
\end{proof}

\begin{prop1}
	The set $\set{(-1, 0), (1, 0)}$ forms an optimal set of two-points with constrained  quantization error $V_2=\frac 13$.
\end{prop1}
\begin{proof}
	From the geometry, we see that the boundary of any two elements on the circle passes through the center of the circle. Thus, in an optimal set of two-points, one Voronoi region will contain the left half, and the other Voronoi region will contain the right half of the support of $P$. Hence, by the routine calculation, we can show that $\set{(-1, 0), (1, 0)}$ forms an optimal set of two-points with constrained quantization error
	\[V_2=\frac 12 \Big(\int_{-1}^0 \rho((x, 0), (-1, 0)) dx+\int_0^1 \rho((x, 0), (1, 0))dx\Big)=\frac{1}{3}.\]
	Thus, the proof of the proposition is complete.
\end{proof}

\subsection{Chord is not a diameter of the circle}
In this case, for definiteness sake, we investigate the optimal sets of $n$-points and the $n$th constrained quantization errors for a Borel probability measure $P$ on $\D R^2$ such that $P$ has support the chord $y=-\frac 12$ for $-\frac {\sqrt 3}{ 2} \leq x\leq \frac {\sqrt 3}{ 2}$, and $P$ is uniform there. Then, the probability density function for $P$ is given by
\[f(x, y)=\left\{\begin{array}{ccc}
	\frac 1 {\sqrt 3} & \te{ if } -\frac {\sqrt 3}{2}\leq x\leq \frac {\sqrt 3}{2} \te{ and } y=-\frac 12,\\
	0  & \te{ otherwise}.
\end{array}\right.
\]
Recall that the circle has rotational symmetry. Thus, for any other chord, the technique of finding the optimal sets of $n$-points and the $n$th constrained quantization errors will be similar. Here we have $dP(s)=P(ds)=P(dx)=dP(x)=f(x, -\frac 12)dx$, where $x$ varies over the line $y=-\frac 12$. The arc of the circle subtended by the chord is represented by $\gq$ for $\frac {7\pi}6\leq \gq\leq  \frac {11\pi} 6$. Moreover, the circle is geometrically symmetric with respect to the line $y=0$, and also the probability measure is symmetric with respect to the line $y=0$, i.e., if two intervals of the same length lie on the support of $P$ and are equidistant from the line $y=0$, then they have the same probability. In proving the results, we can use this symmetry of the circle.

\begin{figure}
	\begin{tikzpicture}[line cap=round,line join=round,>=triangle 45, x=1.0cm,y=1.0cm]
		\clip(-2.196611570247939,-2.190413223140491) rectangle (2.198429752066107,2.199173553719003);
		\draw [line width=0.59pt] (0.,0.) circle (2 cm);
		\draw [line width=0.59pt,dash pattern=on 1pt off 1pt] (0.,3.)-- (0.,-3.);
		\draw [line width=0.59 pt,dash pattern=on 1pt off 1pt] (-3.,0.)-- (3.,0.);
		\draw [line width=0.59 pt] (-1.73205,-1.)-- (1.73205,-1.);
		\begin{scriptsize}
			\draw [fill=ffqqqq] (0.,-2) circle (2.5pt);
		\end{scriptsize}
	\end{tikzpicture} \quad
	\begin{tikzpicture}[line cap=round,line join=round,>=triangle 45, x=1.0cm,y=1.0cm]
		\clip(-2.196611570247939,-2.190413223140491) rectangle (2.198429752066107,2.199173553719003);
		\draw [line width=0.59 pt] (0.,0.) circle (2.cm);
		\draw [line width=0.59pt,dash pattern=on 1pt off 1pt] (0.,3.)-- (0.,-3.);
		\draw [line width=0.59pt,dash pattern=on 1pt off 1pt] (-3.,0.)-- (3.,0.);
		\draw [line width=0.59pt] (-1.73205,-1.)-- (1.73205,-1.);
		\draw [line width=0.59pt,dash pattern=on 1pt off 1pt] (-1.30931, -1.51186)-- (0.,0.);
		\draw [line width=0.59pt,dash pattern=on 1pt off 1pt] (0.,0.)-- (1.30931, -1.51186);
		\begin{scriptsize}
			\draw [fill=ffqqqq] (-1.30931, -1.51186) circle (2.5pt);
			\draw [fill=ffqqqq] (1.30931, -1.51186) circle (2.5pt);
		\end{scriptsize}
	\end{tikzpicture} \quad
	\begin{tikzpicture}[line cap=round,line join=round,>=triangle 45, x=1.0cm,y=1.0cm]
		\clip(-2.196611570247939,-2.190413223140491) rectangle (2.198429752066107,2.199173553719003);
		\draw [line width=0.59 pt] (0.,0.) circle (2.cm);
		\draw [line width=0.59pt,dash pattern=on 1pt off 1pt] (0.,3.)-- (0.,-3.);
		\draw [line width=0.59pt,dash pattern=on 1pt off 1pt] (-3.,0.)-- (3.,0.);
		\draw [line width=0.59pt] (-1.73205,-1.)-- (1.73205,-1.);
		\draw [line width=0.59pt,dash pattern=on 1pt off 1pt] (-1.47077, -1.3553)-- (0.,0.);
		\draw [line width=0.59pt,dash pattern=on 1pt off 1pt] (0.,0.)-- (1.47077, -1.3553);
		\begin{scriptsize}
			\draw [fill=ffqqqq] (-1.47077, -1.3553) circle (2.5pt);
			\draw [fill=ffqqqq] (0., -2.) circle (2.5pt);
			\draw [fill=ffqqqq] (1.47077, -1.3553) circle (2.5pt);
		\end{scriptsize}
	\end{tikzpicture} 
	\begin{tikzpicture}[line cap=round,line join=round,>=triangle 45, x=1.0cm,y=1.0cm]
		\clip(-2.196611570247939,-2.190413223140491) rectangle (2.198429752066107,2.199173553719003);
		\draw [line width=0.59 pt] (0.,0.) circle (2.cm);
		\draw [line width=0.59pt,dash pattern=on 1pt off 1pt] (0.,3.)-- (0.,-3.);
		\draw [line width=0.59pt,dash pattern=on 1pt off 1pt] (-3.,0.)-- (3.,0.);
		\draw [line width=0.59pt] (-1.73205,-1.)-- (1.73205,-1.);
		\draw [line width=0.59pt,dash pattern=on 1pt off 1pt] (-1.54394, -1.27132)-- (0.,0.);
		\draw [line width=0.59pt,dash pattern=on 1pt off 1pt] (0.,0.)-- (-0.658041, -1.88865);
		\draw [line width=0.59pt,dash pattern=on 1pt off 1pt] (0.,0.)-- (1.54394, -1.27132);
		\draw [line width=0.59pt,dash pattern=on 1pt off 1pt] (0.,0.)-- (0.658041, -1.88865);
		\begin{scriptsize}
			\draw [fill=ffqqqq] (-1.54394, -1.27132) circle (2.5pt);
			\draw [fill=ffqqqq] (-0.658041, -1.88865) circle (2.5pt);
			\draw [fill=ffqqqq] (1.54394, -1.27132) circle (2.5pt);
			\draw [fill=ffqqqq] (0.658041, -1.88865) circle (2.5pt);
		\end{scriptsize}
	\end{tikzpicture} \quad
	\begin{tikzpicture}[line cap=round,line join=round,>=triangle 45, x=1.0cm,y=1.0cm]
		\clip(-2.196611570247939,-2.190413223140491) rectangle (2.198429752066107,2.199173553719003);
		\draw [line width=0.59 pt] (0.,0.) circle (2.cm);
		\draw [line width=0.59pt,dash pattern=on 1pt off 1pt] (0.,3.)-- (0.,-3.);
		\draw [line width=0.59pt,dash pattern=on 1pt off 1pt] (-3.,0.)-- (3.,0.);
		\draw [line width=0.59pt] (-1.73205,-1.)-- (1.73205,-1.);
		\draw [line width=0.59pt,dash pattern=on 1pt off 1pt] (-1.58533, -1.21931)-- (0.,0.);
		\draw [line width=0.59pt,dash pattern=on 1pt off 1pt] (0.,0.)-- (-0.984888, -1.74069);
		\draw [line width=0.59pt,dash pattern=on 1pt off 1pt] (1.58533, -1.21931)-- (0.,0.);
		\draw [line width=0.59pt,dash pattern=on 1pt off 1pt] (0.,0.)-- (0.984888, -1.74069);
		\begin{scriptsize}
			\draw [fill=ffqqqq] (-1.58533, -1.21931) circle (2.5pt);
			\draw [fill=ffqqqq] (-0.984888, -1.74069) circle (2.5pt);
			\draw [fill=ffqqqq] (0.,-2.) circle (2.5pt);
			\draw [fill=ffqqqq] (1.58533, -1.21931) circle (2.5pt);
			\draw [fill=ffqqqq] (0.984888, -1.74069) circle (2.5pt);
		\end{scriptsize}
	\end{tikzpicture}\quad
	\begin{tikzpicture}[line cap=round,line join=round,>=triangle 45, x=1.0cm,y=1.0cm]
		\clip(-2.196611570247939,-2.190413223140491) rectangle (2.198429752066107,2.199173553719003);
		\draw [line width=0.59 pt] (0.,0.) circle (2.cm);
		\draw [line width=0.59pt,dash pattern=on 1pt off 1pt] (0.,3.)-- (0.,-3.);
		\draw [line width=0.59pt,dash pattern=on 1pt off 1pt] (-3.,0.)-- (3.,0.);
		\draw [line width=0.59pt] (-1.73205,-1.)-- (1.73205,-1.);
		\draw [line width=0.59pt,dash pattern=on 1pt off 1pt] (-1.61187, -1.184)-- (0,0);
		\draw [line width=0.59pt,dash pattern=on 1pt off 1pt] (-1.1689, -1.62286)-- (0,0);
		\draw [line width=0.59pt,dash pattern=on 1pt off 1pt] (-0.438876, -1.95125)-- (0,0);
		\draw [line width=0.59pt,dash pattern=on 1pt off 1pt] (1.61187, -1.184)-- (0,0);
		\draw [line width=0.59pt,dash pattern=on 1pt off 1pt] (1.1689, -1.62286)-- (0,0);
		\draw [line width=0.59pt,dash pattern=on 1pt off 1pt] (0.438876, -1.95125)-- (0,0);
		\begin{scriptsize}
			\draw [fill=ffqqqq] (-1.61187, -1.184) circle (2.5pt);
			\draw [fill=ffqqqq] (-1.1689, -1.62286) circle (2.5pt);
			\draw [fill=ffqqqq] (-0.438876, -1.95125) circle (2.5pt);
			\draw [fill=ffqqqq] (1.61187, -1.184) circle (2.5pt);
			\draw [fill=ffqqqq] (1.1689, -1.62286) circle (2.5pt);
			\draw [fill=ffqqqq] (0.438876, -1.95125) circle (2.5pt);
		\end{scriptsize}
	\end{tikzpicture}\\ 
	\noindent \begin{tikzpicture}[line cap=round,line join=round,>=triangle 45, x=1.0cm,y=1.0cm]
		\clip(-2.196611570247939,-2.190413223140491) rectangle (2.198429752066107,2.199173553719003);
		\draw [line width=0.59 pt] (0.,0.) circle (2.cm);
		\draw [line width=0.59pt,dash pattern=on 1pt off 1pt] (0.,3.)-- (0.,-3.);
		\draw [line width=0.59pt,dash pattern=on 1pt off 1pt] (-3.,0.)-- (3.,0.);
		\draw [line width=0.59pt] (-1.73205,-1.)-- (1.73205,-1.);
		\draw [line width=0.59pt,dash pattern=on 1pt off 1pt] (-1.63032, -1.15847)-- (0.,0.);
		\draw [line width=0.59pt,dash pattern=on 1pt off 1pt] (0.,0.)-- (-1.2839, -1.5335);
		\draw [line width=0.59pt,dash pattern=on 1pt off 1pt] (0.,0.)-- (-0.726701, -1.86331);
		\draw [line width=0.59pt,dash pattern=on 1pt off 1pt] (1.63032, -1.15847)-- (0.,0.);
		\draw [line width=0.59pt,dash pattern=on 1pt off 1pt] (0.,0.)-- (1.2839, -1.5335);
		\draw [line width=0.59pt,dash pattern=on 1pt off 1pt] (0.,0.)-- (0.726701, -1.86331);
		\begin{scriptsize}
			\draw [fill=ffqqqq] (-1.63032, -1.15847) circle (2.5pt);
			\draw [fill=ffqqqq] (-1.2839, -1.5335) circle (2.5pt);
			\draw [fill=ffqqqq] (-0.726701, -1.86331) circle (2.5pt);
			\draw [fill=ffqqqq] (0.,-2.) circle (2.5pt);
			\draw [fill=ffqqqq] (1.63032, -1.15847) circle (2.5pt);
			\draw [fill=ffqqqq] (1.2839, -1.5335) circle (2.5pt);
			\draw [fill=ffqqqq] (0.726701, -1.86331) circle (2.5pt);
		\end{scriptsize}
	\end{tikzpicture} \quad
	\begin{tikzpicture}[line cap=round,line join=round,>=triangle 45, x=1.0cm,y=1.0cm]
		\clip(-2.196611570247939,-2.190413223140491) rectangle (2.198429752066107,2.199173553719003);
		\draw [line width=0.59 pt] (0.,0.) circle (2.cm);
		\draw [line width=0.59pt,dash pattern=on 1pt off 1pt] (0.,3.)-- (0.,-3.);
		\draw [line width=0.59pt,dash pattern=on 1pt off 1pt] (-3.,0.)-- (3.,0.);
		\draw [line width=0.59pt] (-1.73205,-1.)-- (1.73205,-1.);
		\draw [line width=0.59pt,dash pattern=on 1pt off 1pt] (-1.64388, -1.13915)-- (0.,0.);
		\draw [line width=0.59pt,dash pattern=on 1pt off 1pt] (0.,0.)-- (-1.36154, -1.465);
		\draw [line width=0.59pt,dash pattern=on 1pt off 1pt] (0.,0.)-- (-0.9216, -1.77501);
		\draw [line width=0.59pt,dash pattern=on 1pt off 1pt] (0.,0.)-- (-0.329196, -1.97272);
		\draw [line width=0.59pt,dash pattern=on 1pt off 1pt] (1.64388, -1.13915)-- (0.,0.);
		\draw [line width=0.59pt,dash pattern=on 1pt off 1pt] (0.,0.)-- (1.36154, -1.465);
		\draw [line width=0.59pt,dash pattern=on 1pt off 1pt] (0.,0.)-- (0.9216, -1.77501);
		\draw [line width=0.59pt,dash pattern=on 1pt off 1pt] (0.,0.)-- (0.329196, -1.97272);
		\begin{scriptsize}
			\draw [fill=ffqqqq] (-1.64388, -1.13915) circle (2.5pt);
			\draw [fill=ffqqqq] (-1.36154, -1.465) circle (2.5pt);
			\draw [fill=ffqqqq] (-0.9216, -1.77501) circle (2.5pt);
			\draw [fill=ffqqqq] (-0.329196, -1.97272) circle (2.5pt);
			\draw [fill=ffqqqq] (1.64388, -1.13915) circle (2.5pt);
			\draw [fill=ffqqqq] (1.36154, -1.465) circle (2.5pt);
			\draw [fill=ffqqqq] (0.9216, -1.77501) circle (2.5pt);
			\draw [fill=ffqqqq] (0.329196, -1.97272) circle (2.5pt);
		\end{scriptsize}
	\end{tikzpicture}
\end{figure}

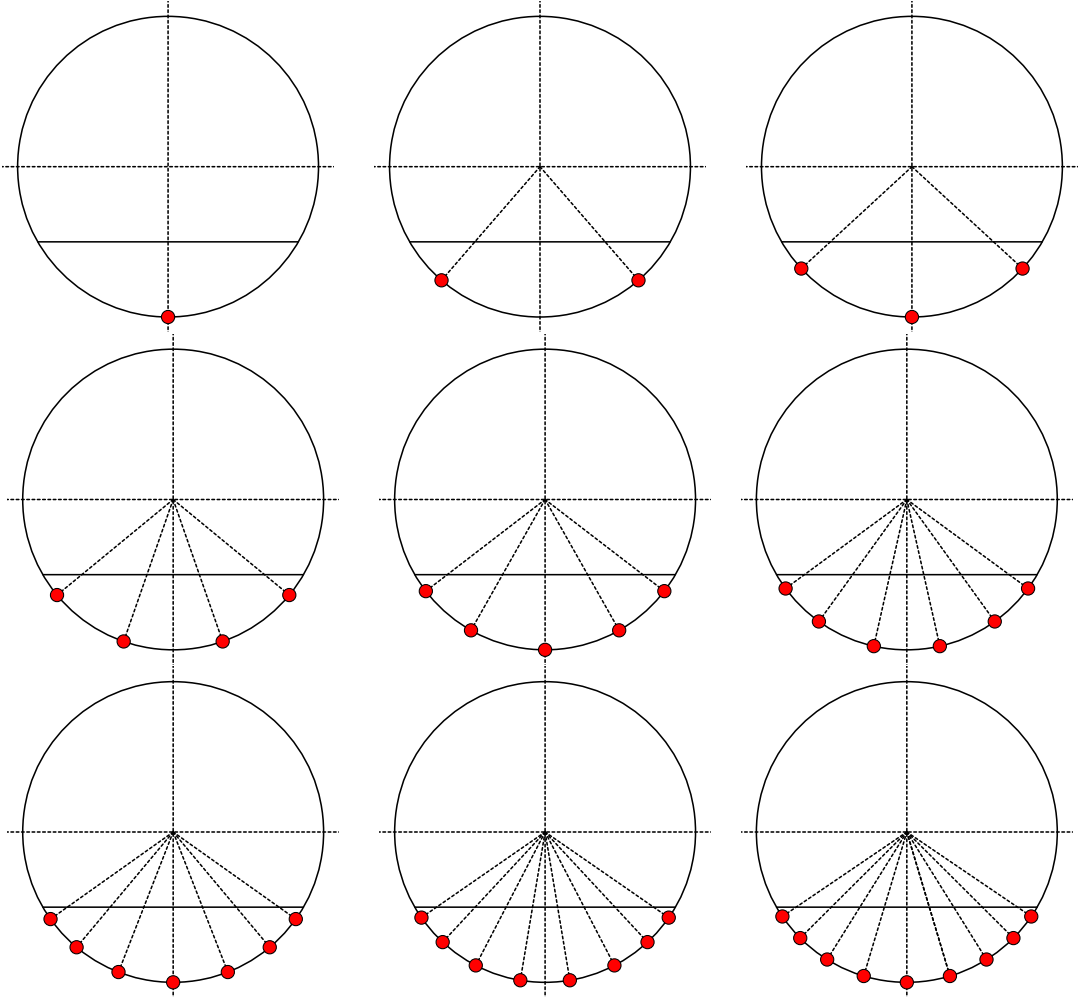
\begin{figure}
		\begin{tikzpicture}[line cap=round,line join=round,>=triangle 45, x=1.0cm,y=1.0cm]
		\clip(-2.196611570247939,-2.190413223140491) rectangle (2.198429752066107,2.199173553719003);
		\draw [line width=0.59 pt] (0.,0.) circle (2.cm);
		\draw [line width=0.59pt,dash pattern=on 1pt off 1pt] (0.,3.)-- (0.,-3.);
		\draw [line width=0.59pt,dash pattern=on 1pt off 1pt] (-3.,0.)-- (3.,0.);
		\draw [line width=0.59pt] (-1.73205,-1.)-- (1.73205,-1.);
		\draw [line width=0.59pt,dash pattern=on 1pt off 1pt] (-1.65425, -1.12403)-- (0, 0);
		\draw [line width=0.59pt,dash pattern=on 1pt off 1pt] (-1.41706, -1.41136)-- (0, 0);
		\draw [line width=0.59pt,dash pattern=on 1pt off 1pt] (-1.05905, -1.69659)-- (0, 0);
		\draw [line width=0.59pt,dash pattern=on 1pt off 1pt] (-0.572988, -1.91616)-- (0, 0);
		\draw [line width=0.59pt,dash pattern=on 1pt off 1pt] (1.65425, -1.12403)-- (0, 0);
		\draw [line width=0.59pt,dash pattern=on 1pt off 1pt] (1.41706, -1.41136)-- (0, 0);
		\draw [line width=0.59pt,dash pattern=on 1pt off 1pt] (0.572988, -1.91616)-- (0, 0);
		\draw [line width=0.59pt,dash pattern=on 1pt off 1pt] (1.05905, -1.69659)-- (0, 0);
		\draw [line width=0.59pt,dash pattern=on 1pt off 1pt] (0.572988, -1.91616)-- (0, 0);
		\begin{scriptsize}
			\draw [fill=ffqqqq] (-1.65425, -1.12403) circle (2.5pt);
			\draw [fill=ffqqqq] (-1.41706, -1.41136) circle (2.5pt);
			\draw [fill=ffqqqq] (-1.05905, -1.69659) circle (2.5pt);
			\draw [fill=ffqqqq] (-0.572988, -1.91616) circle (2.5pt);
			\draw [fill=ffqqqq] (0., -2.) circle (2.5pt);
			\draw [fill=ffqqqq] (1.65425, -1.12403) circle (2.5pt);
			\draw [fill=ffqqqq] (1.41706, -1.41136) circle (2.5pt);
			\draw [fill=ffqqqq] (1.05905, -1.69659) circle (2.5pt);
			\draw [fill=ffqqqq] (0.572988, -1.91616) circle (2.5pt);
		\end{scriptsize}
	\end{tikzpicture}
	\caption{Optimal configuration of $n$ elements for $1\leq n\leq 9$.} \label{Fig1}
	\end{figure}

\begin{prop1} The set $\set{(0, -1)}$ forms an optimal set of one-point with quantization error $V_1=\frac 12$.
\end{prop1}
\begin{proof}
	Let us consider an element $(\cos \gq, \sin \gq)$ on the circle. The distortion error for $P$ with respect to the set $\set{(\cos \gq, \sin \gq)}$ is given by
	\[V(P; \set{(\cos \gq, \sin \gq)})=\int_{-\frac{\sqrt{3}}{2}}^{\frac{\sqrt{3}}{2}} \frac 1 {\sqrt 3}\rho((x,-\frac{1}{2}), (\cos \gq,\sin \gq)) \, dx=\sin \gq+\frac{3}{2},\]
	the minimum value of which is $\frac 12$ and it occurs when $\gq=\frac{3\pi}2$ (see Figure~\ref{Fig1}).
	Thus, the proof of the proposition is yielded.
\end{proof}
\begin{prop1} The optimal set of two-points is given by
	\[\Big\{2 \pi -2 \tan ^{-1}\Big(\frac{\sqrt{3}}{2}+\frac{\sqrt{7}}{2}\Big), \pi +2 \tan ^{-1}\Big(\frac{\sqrt{3}}{2}+\frac{\sqrt{7}}{2}\Big)\Big\}\] with quantization error $V_2=\frac{1}{2} \left(3-\sqrt{7}\right)$.
\end{prop1}
\begin{proof}
	Since the probability measure is symmetric with respect to the line $y=0$, we can assume that in an optimal set of two-points, the Voronoi region of one element will contain the left half of the chord, and the Voronoi region of the other element will contain the right half of the chord, i.e., the boundary of the two Voronoi regions is the $y$-axis.
	Let the left element is $(\cos \gq, \sin\gq)$. Then, due to symmetry, the distortion error for the two elements is given by
	\[2\int_{-\frac{\sqrt{3}}{2}}^0 \frac 1 {\sqrt 3}\rho((x,-\frac{1}{2}), (\cos \gq,\sin \gq)) \, dx=\sin \gq+\frac{1}{2} \sqrt{3} \cos \gq+\frac{3}{2},\]
	which is minimum if $\gq=2 \pi -2 \tan ^{-1} (\frac{\sqrt{3}}{2}+\frac{\sqrt{7}}{2})$, and the minimum value is $\frac{1}{2} \left(3-\sqrt{7}\right)$. Thus, the one element is represented by $\gq=2 \pi -2 \tan ^{-1}(\frac{\sqrt{3}}{2}+\frac{\sqrt{7}}{2})$, and due to symmetry the other element is represented by $\gq=\pi+2 \tan ^{-1}(\frac{\sqrt{3}}{2}+\frac{\sqrt{7}}{2})$ with quantization error for two-points $V_2=\frac{1}{2} \left(3-\sqrt{7}\right)$ (see Figure~\ref{Fig1}). Thus, the proof of the proposition is complete.
\end{proof}

\begin{remark1} \label{rem111}
	Due to the symmetry of the probability measure $P$ and the geometrical symmetry of the circle, we can assume that in an optimal set of $n$-points, where $n\geq 3$, if $n$ is even, then there are $\frac n 2$ elements to the left of the $y$-axis and $\frac n2$ elements to the right of the $y$-axis. On the other hand, if $n$ is odd, then there are $\frac {n-1} 2$ elements to the left of the $y$-axis and $\frac {n-1}2$ elements to the right of the $y$-axis, and the remaining one element will be the element $(-1, 0)$. Moreover, whether $n$ is even or odd, the set of elements on the left side and the set of elements on the right side are reflections of each other with respect to the $y$-axis. Due to this fact, in the sequel of this section, we calculate the optimal sets of $n$-points for $n=8$ and $n=9$. Following the similar technique, whether $n$ is even or odd, one can calculate the locations of the elements for any positive integer $n\geq 3$.
\end{remark1}

\begin{prop1} \label{prop11110}  The optimal set of eight-points is given by
	\begin{align*}
		&\{(-0.821938,-0.569577),(-0.680768,-0.732499),(-0.4608,-0.887504),\\
		&(-0.164598,-0.986361), (0.821938,-0.569577),(0.680768,-0.732499),\\
		&(0.4608,-0.887504),(0.164598,-0.986361)\}
	\end{align*} with quantization error $V_8=0.12327$.
\end{prop1}
\begin{proof}
	Let $\ga_8:=\set{\gq_1, \gq_2, \cdots\gq_8}$ be an optimal set of eight-points. Without any loss of generality, we can assume that $\gq_1<\gq_2<\cdots <\gq_8$. Due to symmetry as mentioned in Remark~\ref{rem111}, the boundary of the Voronoi regions of $\gq_4$ and $\gq_5$ is the $y$-axis, and the elements on the right side of $y$-axis are the reflections of the elements on the left side of $y$-axis with respect to the $y$-axis, and vice versa. Thus, it is enough to calculate the first four elements $\gq_1, \gq_2, \gq_3, \gq_4$. Let the boundaries of the Voronoi regions of $\gq_i$ and $\gq_{i+1}$ intersect the support of $P$ at the elements $(a_i, -\frac 12)$, where $1\leq i\leq 3$. Because of the symmetry, the distortion error is given by
	\begin{align} \label{chord11}
		V(P; \ga_8)=&2\Big(\int_{-\frac{\sqrt{3}}2}^{a_1}\rho((x, 0), (\cos \gq_1, \sin \gq_1))\, dP(x)\notag\\
		&\qquad+\sum_{i=1}^2\int_{a_i}^{a_{i+1}}\rho((x, 0), (\cos \gq_{i+1}, \sin \gq_{i+1}))\, dP(x)\\
		&\qquad +\int_{a_3}^{0}\rho((x, 0), (\cos \gq_4, \sin \gq_4))\, dP(x)\Big) \notag.
	\end{align}
	The canonical equations  are
	\begin{equation*} \label{chord}
		\rho((a_i, -\frac 12), (\cos \gq_i, \sin \gq_i))- \rho((a_i, -\frac 12), (\cos \gq_{i+1}, \sin \gq_{i+1}))=0 \te{ for } i=1, 2, 3.
	\end{equation*}
	Solving the canonical equations, we have
	\[a_1= \frac{\sin \gq_1-\sin \gq_2}{2 \left(\cos \gq_1-\cos \gq_2\right)}, a_2= \frac{\sin \gq_2-\sin \gq_3}{2 \left(\cos \gq_2-\cos \gq_3\right)}, a_3= \frac{\sin \gq_3-\sin \gq_4}{2 \left(\cos \gq_3-\cos \gq_4\right)}.\]
	Putting the values of $a_1, a_2, a_3$ in \eqref{chord11}, we see that $V(P; \ga_8)$ is a function of $\gq_i$ for $i=1, 2,3, 4$.
	Since $V(P; \ga_8)$ is optimal, we have
	\[\frac{\pa}{\pa \gq_i} V(P; \ga_8)=0 \te{ for } i=1, 2, 3, 4.\]
	Solving the above four equations, we obtain the values of $\gq_i$ for which $V(P; \ga_8)$ is minimum as
	\[\gq_1=3.74758, \gq_2= 3.96358, \gq_3= 4.23349, \gq_4= 4.54704.\]
	Due to symmetry $\gq_5, \gq_6, \gq_7, \gq_8$ can also be obtained. Recall that $\gq_i$ represents the element $(\cos \gq_i, \sin \gq_i)$.
	Thus, we obtain the optimal set of eight-points as mentioned in the proposition with quantization error $V_8=0.12327$ (see Figure~\ref{Fig1}). Thus, the proof of the proposition is complete.
\end{proof}

\begin{prop1} The optimal set of nine-points is given by
	\begin{align*}
		&\{(-0.827126, -0.562016), (-0.708531, -0.70568), (-0.529525, -0.848294),\\
		 &(-0.286494, -0.958082), (0., -1), (0.827126, -0.562016),(0.708531, -0.70568),\\
		 & (0.529525, -0.848294), (0.286494, -0.958082)\}
	\end{align*} with quantization error $V_9=0.122546$.
\end{prop1}
\begin{proof}
	Recall Remark~\ref{rem111}. We can assume that the optimal set of nine-points is
	$\ga_9=\set{\gq_i : 1\leq i\leq 9}$ such that $\gq_i<\gq_{i+1}$ for $1\leq i\leq 8$, where $\gq_5=\frac{3\pi}2$.  Because of the same reasoning as given in the proof of Proposition~\ref{prop11110}, we have the distortion error as
	\begin{equation}\label{chord111}
	\begin{aligned} 
		V(P; \ga_9)=&2\Big(\int_{-\frac{\sqrt{3}}2}^{a_1}\rho((x, 0), (\cos \gq_1, \sin \gq_1))\, dP(x)\notag\\
		 &+\sum_{i=1}^3\int_{a_i}^{a_{i+1}}\rho((x, 0), (\cos \gq_{i+1}, \sin \gq_{i+1}))\, dP(x)\notag\\
		&+\int_{a_4}^{0}\rho((x, 0), (0, -1))\, dP(x)\Big).
	\end{aligned}
	\end{equation}
	The canonical equations are
	\begin{equation*} \label{chord1}
		\rho((a_i, -\frac 12), (\cos \gq_i, \sin \gq_i))- \rho((a_i, -\frac 12), (\cos \gq_{i+1}, \sin \gq_{i+1}))=0 \te{ for } i=1, 2, 3, 4.\end{equation*}
	Solving the canonical equations, we obtain the values of $a_i$ for $1\leq i\leq 4$. Putting the values of $a_i$ in \eqref{chord111}, we see that $V(P; \ga_9)$ is a function of $\gq_i$ for $i=1, 2,3, 4$.
	Since $V(P; \ga_9)$ is optimal we have
	\[\frac{\pa}{\pa \gq_i} V(P; \ga_9)=0 \te{ for } i=1, 2, 3, 4.\]
	Solving the above four equations, we obtain the values of $\gq_i$ for which $V(P; \ga_9)$ is minimum as
	\[\gq_1=3.73841,\gq_2= 3.92497, \gq_3=4.15435, \gq_4=4.42182,\]
	Due to symmetry $\gq_6, \gq_7, \gq_8, \gq_9$ can also be obtained. Recall that $\gq_i$ represents the element $(\cos \gq_i, \sin \gq_i)$.
	Hence, we obtain the optimal set of nine-points as mentioned in the proposition with quantization error $V_9=0.122546$ (see Figure~\ref{Fig1}). Thus, the proof of the proposition is complete.
\end{proof}

\section{Quantization dimensions and quantization coefficients}  \label{sec5}
Let $P$ be a Borel probability measure on $\D R^k$ equipped with a metric, and let $ r\in (0,\infty)$. A quantization without any constraint is called an unconstrained quantization. In unconstrained quantization (see \cite{GL}), the numbers
\begin{equation} \label{eq55} \ul D_r(P):=\liminf_{n\to \infty}  \frac{r\log n}{-\log V_{n,r}(P)} \te{ and } \ol D_r(P):=\limsup_{n\to \infty} \frac{r\log n}{-\log V_{n, r}(P)}, \end{equation}
are called the \tit{lower} and the \tit{upper quantization dimensions} of the probability measure $P$ of order $r$, respectively. If $\ul D_r (P)=\ol D_r (P)$, the common value is called the \tit{quantization dimension} of $P$ of order $r$ and is denoted by $D_r(P)$. In unconstrained quantization (see \cite{GL}) for any $\gk>0$, the two numbers $\liminf_n n^{\frac r \gk}  V_{n, r}(P)$ and $\limsup_n  n^{\frac r \gk}V_{n, r}(P)$ are, respectively, called the \tit{$\gk$-dimensional lower} and the \tit{upper quantization coefficients} for $P$. The quantization coefficients provide us with more accurate information about the asymptotics of the quantization error than the quantization dimension. In unconstrained case, it is known that for an absolutely continuous probability measure, the quantization dimension always exists and equals the Euclidean dimension of the support of $P$, and the quantization coefficient exists as a finite positive number (see \cite{BW}). If the $\gk$-dimensional lower and the upper quantization coefficients for $P$ are finite and  positive, then $\gk$ equals the quantization dimension of $P$. 

\begin{remark} \label{remark1} 
	Unconstrained quantization error $V_{n, r}(P)$ goes to zero as $n$ tends to infinity (see \cite{GL}). This is not true in the case of constrained quantization. Constrained quantization error $V_{n, r}(P)$ can approach to any nonnegative number  as $n$ tends to infinity, and it depends on the constraint $S$ that occurs in the definition of constrained quantization error as given in \eqref{Vr}. In this regard, below we give some examples.  
\end{remark} 

Let $P$ be a Borel probability measure on $\D R^2$ such that $P$ is uniform on its support $\set{(x, y) \in \D R^2 : 0\leq x\leq 2 \te{ and } y=0}$.
Let $V_n(P):=V_{n, 2}(P)$ be its constrained quantization error. If the elements in the optimal sets lie on the line $y=1$ between the two elements $(\frac 12, 1)$ and $(\frac 32, 1)$, then by Theorem~\ref{sec2Theorem2}, for $n\geq 3$,
\begin{equation} \label{eq56} V_n(P)=\frac{25 n^2-50 n+26}{24 (n-1)^2} \te{ implying } \lim_{n\to \infty} V_n(P)=\frac{25}{24}.
\end{equation}
If the elements in the optimal sets lie on the line $y=1$ between the two elements $(0, 1)$ and $(\frac {28}{15}, 1)$, then by Theorem~\ref{sec2Theorem3}, for $n\geq 8$,
\begin{equation} \label{eq57}V_n(P)=\frac{7 (5788 (n-1) n+3015)}{10125 (1-2 n)^2}\te{ implying } \lim_{n\to \infty} V_n(P)=\frac{10129}{10125}.\end{equation}
On the other hand, {if the elements in the optimal sets lie on the line $y=\sqrt 3 x$ between the two elements $(0, 0)$ and $(2, 2\sqrt 3)$, then by Corollary~\ref{cor1}, for $n\geq 2$,
	\begin{equation} \label{eq571}V_n(P)=\frac{36 n^2+49 n-144}{12 n^3} \te{ implying } \lim_{n\to \infty} V_n(P)=0.\end{equation}
	Moreover, notice that if $P$ is a uniform distribution on a unit circle, and if the elements in an optimal set of $n$-points lie on a concentric circle with radius $a$, then by Theorem~\ref{sec3theorem1}, for $n\geq 3$,
	\begin{equation} \label{eq58} V_n(P)=a^2+1-\frac{2an}{\pi} \sin  \frac{\pi }{n} \te{ implying } \lim_{n\to \infty} V_n(P)=(a-1)^2,
	\end{equation}
	which is a nonnegative constant depending on the values of $a$.

	\begin{remark}\label{remark2} 
		If the support of $P$ contains infinitely many elements, then the $n$th unconstrained quantization error is strictly decreasing. This fact is not true in constrained quantization, i.e., the $n$th constrained quantization error for a Borel probability measure can eventually remain constant as can be seen from Subsection~\ref{subdia}. 
	\end{remark}
	
	\begin{remark}
		By Remark~\ref{remark1} and Remark~\ref{remark2}, we can conclude that there are some properties that are true in unconstrained quantization, but are not true in constrained quantization. These motivate us to adopt more general definitions of quantization dimension and quantization coefficient, as given below, which are meaningful both in constrained and unconstrained scenarios. 
	\end{remark} 
	\begin{defi1} \label{defi001} 
		Let $P$ be a Borel probability measure on $\D R^k$ equipped with a metric $d$, and let $r \in (0,\infty)$. Let $V_{n, r}(P)$ be the $n$th constrained quantization error of order $r$ for a given $S$ that occurs in \eqref{Vr}.  Let $V_{n, r}(P)$ be a strictly decreasing sequence. 
		Then, it converges to its exact lower bound, which is a nonnegative constant. Set \[
		V_{\infty, r}(P):=\lim_{n\to \infty} V_{n, r}(P).\]
		Then, $(V_{n, r}(P)-V_{\infty, r}(P))$ is a strictly decreasing sequence of positive numbers such that \[\mathop{\lim}\limits_{n\to \infty}(V_{n, r}(P)-V_{\infty, r}(P))=0.\]
		Write
		\begin{align}
			\left\{ \begin{array}{ll}\label{eq555}
				\ul D_r(P):=\mathop{\liminf}\limits_{n\to \infty}  \frac{r\log n}{-\log (V_{n, r}(P)-V_{\infty, r}(P))}, \te{ and }\\
				\ol D_r(P):=\mathop{\limsup}\limits_{n\to \infty} \frac{r\log n}{-\log (V_{n, r}(P)-V_{\infty, r}(P))}.
			\end{array}\right.\end{align}
		$\ul D_r(P)$ and $\ol D_r(P)$ are called the \tit{lower} and the \tit{upper constrained quantization dimensions} of the probability measure $P$ of order $r$ with respect to the constraint $S$, respectively. If $\ul D_r (P)=\ol D_r (P)$, the common value is called the \tit{constrained quantization dimension} of $P$ of order $r$ with respect to the constraint $S$ and is denoted by $D_r(P)$. The constrained quantization dimension measures the speed how fast the specified measure of the constrained quantization error converges as $n$ tends to infinity. A higher constrained quantization dimension suggests a faster convergence of the $n$th constrained quantization error.  
		For any $\gk>0$, the two numbers
		\[\liminf_{n\to \infty} n^{\frac r \gk}  (V_{n, r}(P)-V_{\infty, r}(P)) \te{ and } \limsup_{n\to \infty}  n^{\frac r \gk}(V_{n, r}(P)-V_{\infty, r}(P))\] are, respectively, called the \tit{$\gk$-dimensional lower} and the \tit{upper} constrained quantization coefficients for $P$ of order $r$ with respect to the constraint $S$, respectively. If the \tit{$\gk$-dimensional lower} and the \tit{upper constrained quantization coefficients} for $P$ exist and are equal, then we call it the \tit{$\gk$-dimensional constrained quantization coefficient} for $P$ of order $r$ with respect to the constraint $S$.
	\end{defi1}

\begin{remark}
While Proposition~\ref{prop0} ensures that $\{V_{n,r}(P)\}$ is decreasing and bounded below (and thus convergent), in Definition~\ref{defi001}  we additionally assume that $\{V_{n,r}(P)\}$ is \emph{strictly} decreasing. This stronger condition is imposed to guarantee that 
\[
V_{n,r}(P) - V_{\infty,r}(P) \neq 0 \quad \text{for all finite } n,
\]
which is essential for the meaningful definition of the constrained quantization dimension and coefficient. Without strict monotonicity, the sequence may stabilize at a finite stage, causing the denominator in the associated scaling limits to vanish and leading to degeneracy in the asymptotic characterization.

\end{remark} 
	
	The following proposition is a generalized version of \cite[Propoisition~11.3]{GL}. 
	
	\begin{prop} \label{constrainedcoefficient}
		Let $P$ be a Borel probability measure, and $\underline{D}_r(P) \te{ and } \overline{D}_r(P)$ be the lower and the upper constrained quantization dimensions, respectively.   
		\begin{enumerate}
			\item If $0\leq s< \underline{D}_r(P) <t$, then
			\[\lim_{n\to \infty} n^{\frac rs}  (V_{n, r}(P)-V_{\infty, r}(P))=+\infty \te{ and } \liminf_{n\to \infty} n^{\frac rt}  (V_{n, r}(P)-V_{\infty, r}(P))=0.\]
			\item  If $0\leq s< \overline{D}_r(P) <t$, then
			\[\limsup_{n\to \infty} n^{\frac rs}  (V_{n, r}(P)-V_{\infty, r}(P))=+\infty \te{ and } \lim_{n\to \infty} n^{\frac rt}  (V_{n, r}(P)-V_{\infty, r}(P))=0.\]
		\end{enumerate}
	\end{prop} 
	\begin{proof}
		Let us first prove $(1)$. Let $0 \leq s < \underline{D}_r(P)$. Choose $s' \in (s, \underline{D}_r(P))$. Then, there exists an $n_0\in \D N$ with 
		\[(V_{n,r}(P) - V_{\infty,r}(P))<1 \te{ and } \frac{r\log n}{-\log (V_{n,r}(P) - V_{\infty,r}(P))}>s'\]
		for all $n\geq n_0$. This implies that 
		\[n^r \left( V_{n,r}(P) - V_{\infty,r}(P) \right)^{s'}>1,\]
		and so, 
		\[n^r \left( V_{n,r}(P) - V_{\infty,r}(P) \right)^{s}>\left( V_{n,r}(P) - V_{\infty,r}(P) \right)^{s-s'}\]
		for all $n\geq n_0$. Hence, 
		\[\lim_{n\to \infty} n^r \left( V_{n,r}(P) - V_{\infty,r}(P) \right)^{s}=+\infty, \te{ i.e., }  \lim_{n\to \infty} n^{\frac rs}  (V_{n, r}(P)-V_{\infty, r}(P))=+\infty.\]
		For $\underline{D}_r(P) <t$, there is a $t'\in (\underline{D}_r(P), t)$ and a subsequence $\left( V_{n_k,r}(P) - V_{\infty,r}(P) \right)$ with 
		\[\left(V_{n_k,r}(P) - V_{\infty,r}(P) \right)<1 \te{ and } 	\frac{r \log n_k}{-\log(V_{n_k,r}(P) - V_{\infty,r}(P))}\leq t'\]
		for all $k\in \D N$. This implies that 
		\[n_k^r \left( V_{n_k,r}(P) - V_{\infty,r}(P) \right)^{t'}\leq 1\]
		and so, \[n_k^r \left( V_{n_k,r}(P) - V_{\infty,r}(P) \right)^{t}\leq \left( V_{n_k,r}(P) - V_{\infty,r}(P) \right)^{t-t'}.\]
		Recall that $\lim\limits_{k\to \infty} \left( V_{n_k,r}(P) - V_{\infty,r}(P) \right)^{t-t'}=0$, and hence 
		\[\liminf_{n\to \infty} n^r \left( V_{n,r}(P) - V_{\infty,r}(P) \right)^{t}\leq \lim_{k\to\infty} n_k^r \left( V_{n_k,r}(P) - V_{\infty,r}(P) \right)^{t}=0,\]
		yielding \[\liminf_{n\to \infty} n^{\frac rt}  (V_{n, r}(P)-V_{\infty, r}(P))=0.\]
		Thus, the proof of $(1)$ is concluded. Proceeding in the similar way, we can prove $(2)$. Thus, the proof of the proposition is obtained. 
	\end{proof}

	The following corollary is a direct consequences of Proposition \ref{constrainedcoefficient}.
	\begin{cor}
		If the $\gk$-dimensional lower and the upper constrained quantization coefficients for $P$ are finite and positive, then the constrained quantization dimension $D_r(P)$ of $P$ exists and $\gk$ equals $D_r(P)$.
	\end{cor}
	Let $V_{n, 2}(P)$ be the $n$th constrained quantization error of order $2$. Then, \\
	\noindent \eqref{eq56} implies that
	\begin{equation} \label{eq59}\lim_{n\to\infty} \frac{2\log n}{-\log (V_{n, 2}(P)-V_{\infty, 2}(P))}=1  \te{ and } \lim_{n\to\infty}n^2(V_{n, 2}(P)-V_{\infty, 2}(P))=\frac{1}{24},
	\end{equation}
	\eqref{eq57} implies that
	\begin{equation} \label{eq60}\lim_{n\to\infty} \frac{2\log n}{-\log (V_{n, 2}(P)-V_{\infty, 2}(P))}=1  \te{ and } \lim_{n\to\infty}n^2(V_{n, 2}(P)-V_{\infty, 2}(P))=\frac{2744}{10125},
	\end{equation}
	\eqref{eq571} implies that
	\begin{equation} \label{eq601}\lim_{n\to\infty} \frac{2\log n}{-\log (V_{n, 2}(P)-V_{\infty, 2}(P))}=2  \te{ and } \lim_{n\to\infty}n(V_{n, 2}(P)-V_{\infty, 2}(P))=3,
	\end{equation}
	and  \eqref{eq58} implies that
	\begin{equation} \label{eq63}
		\lim_{n\to\infty} \frac{2\log n}{-\log (V_{n, 2}(P)-V_{\infty, 2}(P))}=1  \te{ and } \lim_{n\to\infty}n^2(V_{n, 2}(P)-V_{\infty, 2}(P))=\frac{\pi ^2 a}{3}.
	\end{equation}

	\subsection{Observations and Conclusions}
	\begin{enumerate}
		
		\item In unconstrained quantization the elements in an optimal set, for a Borel probability measure $P$, are the conditional expectations in their own Voronoi regions. This fact is not true in constrained quantization, for example, for the probability measure $P$, defined in Corollary~\ref{cor1}, the optimal set of two-points is obtained as $\set{(\frac 18, \frac {1}{8}\sqrt 3), (\frac 38, \frac {3}{8}\sqrt 3)}$, and the set of conditional expectations of the Voronoi regions is $\set{(\frac 12, 0), (\frac 32, 0)}$, i.e., the two sets are different.
		
		\item In unconstrained quantization if the support of $P$ contains at least $n$ elements, then an optimal set of $n$-points contains exactly $n$ elements. This fact is not true in constrained quantization. For example, from Subsection~\ref{subdia}, we see that if a Borel probability measure $P$ on $\D R^2$ has support the diameter of a circle and the constraint $S$ is the circle, then the optimal sets of $n$-points containing exactly $n$ elements exist only for $n=1$ and $n=2$, and they do not exist for any $n\geq 3$, though the support has infinitely many elements.

		\item  In unconstrained quantization, the quantization dimension of an absolutely continuous probability measure exists and equals the Euclidean dimension of the support of $P$. This fact is not true in constrained quantization, as can be seen from the expressions \eqref{eq59}, \eqref{eq60}, and \eqref{eq601}. Each of the probability measures has support the closed interval $[0, 2]$ on a line, but the quantization dimensions are different, i.e., the quantization dimension in constrained quantization depends on the constraint $S$ that occurs in the definition of constrained quantization error. The quantization dimension, in the case of unconstrained quantization, if it exists, measures the speed how fast the specified measure of the error goes to zero as $n$ tends to infinity, on the other hand, in the case of constrained quantization, if it exists, measures the speed how fast the specified measure of the error converges as $n$ tends to infinity.
		
		\item In unconstrained quantization, the quantization coefficient for an absolutely continuous probability measure exists as a unique finite positive number. In constrained quantization, the quantization coefficient for an absolutely continuous probability measure also exists, but it is not unique, and can be any nonnegative number as can be seen from the expressions of quantization coefficients in \eqref{eq59}, \eqref{eq60}, \eqref{eq601}, and \eqref{eq63}, i.e., the quantization coefficient in constrained quantization depends on the constraint $S$ that occurs in the definition of constrained quantization error.
\end{enumerate}

\section{Future Directions} \label{sec6}
The results in this paper suggest several concrete open problems and directions for future research in constrained quantization.

A fundamental question is to characterize how the geometry of the constraint set \( S \) influences the constrained quantization dimension and quantization coefficient. In particular, it would be of interest to determine how intrinsic properties of \( S \), such as its dimension, curvature, smoothness, convexity, or fractal structure, affect asymptotic quantization rates. For example, one may ask whether the constrained quantization dimension coincides with the Hausdorff or Minkowski dimension of \( S \) under suitable regularity conditions, and how curvature or boundary effects modify optimal point configurations.

Another important direction is to extend constrained quantization theory beyond the Euclidean setting. This includes developing an analogous framework for Riemannian manifolds equipped with geodesic distance, as well as for more general metric or non-Euclidean spaces. Such an extension raises natural questions about whether quantization behavior is governed primarily by intrinsic geometry and how metric distortion or curvature influences the existence, structure, and asymptotic properties of optimal sets.

A further open problem concerns existence, uniqueness, and stability of constrained optimal sets. While existence of optimal one-point sets holds in broad generality, for \( n \geq 2 \) constrained optimal sets may fail to exist or may exhibit degeneracies depending on the geometry of \( S \). It would be valuable to establish geometric or measure-theoretic conditions ensuring the existence of \( n \) distinct optimal points, as well as to study stability of optimal configurations under perturbations of the probability measure \( P \) or small deformations of the constraint set \( S \).

From a computational perspective, although we derive explicit optimal configurations for several constrained geometries, the systematic design of numerical optimization algorithms and their computational implementation constitutes an important direction for future research. Developing efficient computational methods for constrained quantization could facilitate large-scale experimentation, visualization of optimal structures, and practical applications in engineering and data-driven optimization.

We believe that addressing these questions will deepen the understanding of how geometric constraints shape quantization behavior and will broaden the applicability of constrained quantization to geometric optimization, signal processing, and related areas.


 \subsection*{Acknowledgement}

The authors are grateful to \textit{Professor Carl P. Dettmann} of the University of Bristol, UK, for his valuable comments and suggestions, which contributed to improving this manuscript. 
The first author also expresses sincere gratitude to her supervisor, \textit{Professor Tanmoy Som} of IIT (BHU), Varanasi, India, for his guidance, encouragement, and continuous support during the preparation of this work. 
The authors further thank the anonymous referees for their careful reading of the manuscript and their constructive feedback, which helped strengthen the clarity and quality of the paper.

\section*{Declaration}
							
							\noindent
							\textbf{Conflicts of interest.} We do not have any conflict of interest.\\
							\\
							\noindent
							\textbf{Data availability:} No data were used to support this study.\\
							\\
							\noindent
							\textbf{Code availability:} Not applicable\\
							\\
							\noindent
							\textbf{Authors' contributions:} Each author contributed equally to this manuscript.



\end{document}